\patchcmd{\maketitle}{\@fnsymbol}{\@alph}{}{} 
\patchcmd{\@maketitle}{\begin{center}}{\begin{flushleft}}{}{}
\patchcmd{\@maketitle}{\begin{tabular}[t]{c}}{\begin{tabular}[t]{@{}l}}{}{}
\patchcmd{\@maketitle}{\end{center}}{\end{flushleft}}{}{}
\renewcommand\section{\@startsection{section}{1}{\z@}%
{-3.5ex \@plus -1ex \@minus -.2ex}%
{2.3ex \@plus.2ex}%
{\normalfont\large\bfseries}}
\newtheorem{theorem}{Theorem}
\newtheorem{lemma}{Lemma}
\newtheorem{conjecture}{Conjecture}
\newcommand{\colonequal}{\mathrel{\mathop:}=}
\newcommand{\N}{\mathbb{N}}
\newcommand{\Z}{\mathbb{Z}}
\newcommand{\R}{\mathbb{R}}
\newcommand{\M}{\mathcal{M}}
\newcommand{\placegrid}{%
\draw[-,thin] (-\d0,0) -- (\s+\d0,0) node[at start, left]{$t$};
\draw[-,thin] (0,-\d0) -- (0,\t+\d0) node[at start, below]{$0$};
\draw[-,thin] (\s,-\d0) -- (\s,\t+\d0) node[at start, below]{$s$};
\draw[-,thin] (-\d0,\t) -- (\s+\d0,\t) node[at start, left]{$1$};
}
\newcommand{\thm}[1]{Theorem~\ref{thm:#1}}
\newcommand{\lem}[1]{Lemma~\ref{lem:#1}}
\newcommand{\fig}[1]{Figure~\ref{fig:#1}}
\renewcommand{\sec}[1]{Section~\ref{sec:#1}} 
\begin{document}

\font\myfont=cmr12 at 12pt
\title{\begin{flushleft} \textbf{Exact Lower Bounds for Monochromatic Schur Triples and Generalizations} \end{flushleft}}

\author{%
  Christoph Koutschan%
  \thanks{\scriptsize{RICAM, Altenberger Stra\ss e 69, A-4040 Linz, Austria, christoph.koutschan@ricam.oeaw.ac.at}}
  \ and
  Elaine Wong%
  \thanks{\scriptsize{RICAM, Altenberger Stra\ss e 69, A-4040 Linz, Austria, elaine.wong@ricam.oeaw.ac.at
  \rule{40pt}{0pt}
  The final version of this paper appeared in 2020 in the book
  \emph{Algorithmic Combinatorics: Enumerative Combinatorics, Special
    Functions and Computer Algebra}
  of the Springer series \emph{Texts \& Monographs in Symbolic Computation},
  pages 223--248, DOI: 10.1007/978-3-030-44559-1\_13.}}}


\date{}

\maketitle

\vspace{2.2cm}

\begin{flushright}
\begin{minipage}{0.6\textwidth}
  \textit{Dedicated to Peter Paule, our academic father and grandfather.
    Peter, we wish you many more happy, healthy, and productive years.}
\end{minipage}
\end{flushright}
\bigskip

\abstract{ We derive exact and sharp lower bounds for the number of
  monochromatic generalized Schur triples $(x,y,x+ay)$ whose entries are from
  the set $\{1,\dots,n\}$, subject to a coloring with two different
  colors. Previously, only asymptotic formulas for such bounds were known, and
  only for $a\in\N$. Using symbolic computation techniques, these results are
  extended here to arbitrary $a\in\R$. Furthermore, we give exact formulas for
  the minimum number of monochromatic Schur triples for $a=1,2,3,4$, and
  briefly discuss the case $0<a<1$.}

\vspace{0.8cm}

\section{Introduction and historical background}
\label{sec:intro}

\vspace{1mm}\noindent

Let $\N$ denote the set of positive integers. A triple $(x,y,z)\in\N^3$ is
called a Schur triple if its entries satisfy the equation $x+y=z$. The set
$\{1,\dots,n\}$ of all positive integers up to~$n$ will be denoted by $[n]$. A
coloring of $[n]$ is a map $\chi\colon[n]\to C$ for some finite set~$C$ of
colors.  For example, a map $\chi\colon[n]\to\{\mathrm{red},\mathrm{blue}\}$
is a $2$-coloring. We say that a Schur triple is monochromatic (with respect
to a given coloring) if all of its entries have been assigned the same color;
we will abbreviate ``monochromatic Schur triple'' by MST.

With these notations, one can ask questions like: given $n\in\N$ and a coloring~$\chi$
of~$[n]$, how many MSTs are there in $[n]^3$? Let
us denote this number as follows:
\begin{equation}\label{eq:defMnchi}
  \M(n,\chi) \colonequal \bigl|\bigl\{ (x,y,z)\in[n]^3:
  z=x+y \,\land\, \chi(x)=\chi(y)=\chi(z) \bigr\}\bigr|.
\end{equation}

For our purposes, two Schur triples $(x,y,x+y)$ and $(y,x,x+y)$ are considered distinct if
$x\neq y$. We emphasize this convention since sometimes in the literature
these two triples are counted only once, which is equivalent to imposing the
extra condition $x\leq y$. For example, there are exactly four monochromatic Schur
triples on $[6]=\{1,\dots,6\}$ when $2$ and $4$ are colored red and $1,3,5,6$
are colored blue, namely $(1,5,6)$, $(2,2,4)$, $(3,3,6)$, and $(5,1,6)$. We
will use a short-hand notation for $2$-colorings, namely as words on the
alphabet $\{R,B\}$: the $i$-th letter is~$R$ if the integer $i$ is colored red
and~$B$ if it is blue. So the above $2$-coloring would be denoted by
$BRBRBB$. We will also make use of the power notation for words, e.g.,
$R^2B^3=RRBBB$.

The namesake of the triples in this work refers to Issai Schur~\cite{Schur17},
who in 1917 studied a modular version of Fermat's last theorem (first
formulated and proved by Leonard Dickson). In order to
give a simpler proof of the theorem, Schur introduced a \emph{Hilfssatz}
confirming the existence of a least positive integer~$n=n(m)$ such that for any
$m$-coloring of $[n]$ an MST exists (this is nowadays
known as Schur's theorem). In 1927, Van der Waerden~\cite{Vanderwaarden27}
generalized this result to monochromatic arithmetic progressions of any length
$k$. Then in 1928, Ramsey proved his eponymous theorem, showing the existence
of a least positive integer $n$ such that every edge-coloring of a complete
graph on $n$ vertices, with the colors red and blue, admits either a complete
red subgraph or a complete blue subgraph. However, a real increase in the
popularity of these kinds of Ramsey-theoretic problems came with the
rediscovery of Ramsey's theorem in a 1935 paper of Erd\H{o}s and
Szekeres~\cite{ErdosSzekeres35}, which ultimately led to a simpler proof of
Schur's theorem, indicating their close connections. For the curious reader,
this rich history is beautifully depicted in a book by Landman and
Robertson~\cite{LandmanRobertson04}.

We now arrive at a point of more than just questions of existence. In 1959,
Alan Goodman~\cite{Goodman59} studied the \emph{minimum} number of
monochromatic triangles under a 2-edge coloring of a complete graph on $n$
vertices. Then in 1996, Graham, R\"{o}dl, and
Ruci\'{n}ski~\cite{GrahamRodlRucinski96} found it natural to extend the
problem of ``determining the minimum number under any 2-coloring'' to Schur
triples. In fact, Graham offered a prize of 100 USD for an answer to such a
question; it has subsequently been successfully answered many times over, in
an asymptotic sense. In order to give some more context to this problem, we first introduce some additional notation.

We start by wondering about what we can say about the number of MSTs on $[n]$
if we do not prescribe a particular coloring.  It is not difficult to
calculate that there are exactly $\sum_{i=1}^{n-1}i=\frac12n(n-1)=\binom{n}{2}$
Schur triples on~$[n]$. Trivially, this yields an upper bound for the number
of MSTs, which can be achieved by coloring all numbers with the same
color. This is the reason why it is more natural (and more interesting!) to
ask for a lower bound for $\M(n,\chi)$, that is: for given $n\in\N$, what is
the ``best'' lower bound for the number of MSTs regardless of the choice of
coloring? Of course, $0$ is a trivial such lower bound, but we are aiming for something sharp, in the sense that for each~$n$ there exists a coloring for
which this bound is actually attained. Differently stated, we are looking for
the minimal number of monochromatic Schur triples among all possible colorings
of~$[n]$:
\begin{equation}\label{eq:defMn}
  \M(n) \colonequal \min_{\chi\colon[n]\to\{R,B\}} \M(n,\chi).
\end{equation}
For example, for $n=6$, one cannot avoid the occurrence of monochromatic Schur
triples, but there exists a $2$-coloring for which only a single such triple
occurs, namely the triple $(1,1,2)$ for the coloring $RRBBBR$. Therefore, we have
$\M(6)=\M(6,RRBBBR)=1$.

As mentioned before, this problem was only studied from an asymptotic point of
view: Robertson and Zeilberger~\cite{RobertsonZeilberger98} was first to give
the lower bound $\tfrac{1}{22}n^2+O(n)$ as $n\to\infty$ (and consequently won
Graham's cash prize), where it has to be noted that they count only Schur
triples $(x,y,x+y)$ with the condition $x\leq y$ imposed. This lower bound was
independently confirmed by Datskovsky~\cite{Datskovsky03},
Schoen~\cite{Schoen99}, and Thanatipanonda~\cite{Thanatipanonda09}. Schoen
also provided a proof of an ``optimal'' coloring of $[n]$ that would give such
a minimum number, and such a coloring is what we assume later in this
paper. The asymptotic lower bounds for the generalized Schur triples case
$(x,y,x+ay)$ for $a\geq2$ is $\tfrac{1}{2a(a^2+2a+3)}n^2+O(n)$ as $n\to\infty$,
without the requirement of $x\leq y$. This was conjectured by
Thanatipanonda~\cite{Thanatipanonda09} and Butler, Costello, and
Graham~\cite{ButlerCostelloGraham10}, and subsequently proven in 2017 by
Thanatipanonda and Wong~\cite{ThanatipanondaWong17}.

In this paper, we take a slightly different approach by using known computer
algebra techniques and creative simplifications to develop exact formulas for
the minimum number of such triples (in both the Schur triples case and the generalized
Schur triples case) and give an analysis of the transitional behavior between
the cases. Thus, in order to keep some consistency for comparison, we will
remove the assumption of $x\leq y$ when counting MSTs. In this way, we can
explain why the behavior of the minimum number of triples jumps when moving
from the case $a=1$ to the case $a\geq2$ (note that the above asymptotic
formula does not specialize to the expected prefactor~$\frac{1}{11}$ when
$a=1$ is substituted).

The overall plan is to systematically exploit the full force of symbolic
computation and perform a complete analysis of determining the minimum number
of monochromatic triples $(x,y,x+ay)$ in both the discrete context $(a\in\N)$
and the continuous context $(a\in\mathbb{R}^+)$. This requires three courses
of a mathematical meal. We serve an appetizer in \sec{schur}, showing how to
derive an exact formula for the minimum in the classic Schur triple case
(corresponding to $a=1$ in the general equation). This sets us up for the main
course in \sec{real}, where we perform a full analysis for $a>0$, illustrating
that a global minimum can always be found. Interesting transitional
behaviors occur at many locations for $a\in(0,1)$ and one key transition
occurs at $a\approx 1.17$.  Admittedly, this course may be a bit difficult to
swallow, and we hope that the reader will not suffer from indigestion.  For
dessert, we follow the procedure described in \sec{schur}, and illustrate how
it can systematically produce (ostensibly, an infinite number) of exact
formulas for the minimum number of generalized Schur triples. Accordingly, in
\sec{discrete}, we leave the reader with exact formulas for the minimum number
of generalized Schur triples for $a=2,3,4$, and $a=\frac12$, with the hope that
s/he will leave satisfied.

\pagebreak

For the reader's convenience, all computations and diagrams are in the
Mathematica notebook \cite{KoutschanWong19} that accompanies this paper,
freely available at the first author's website. This material may also be of
independent interest, since we believe that also other problems can be
attacked in a similar fashion, see for example the recent study on the
peacable queens problem~\cite{YaoZeilberger19}.

\vspace{0.8cm}

\section{Exact lower bound for monochromatic Schur triples}
\label{sec:schur}

\vspace{1mm}\noindent

\begin{figure}[t]
  \begin{center}
    \includegraphics[width=0.5\textwidth]{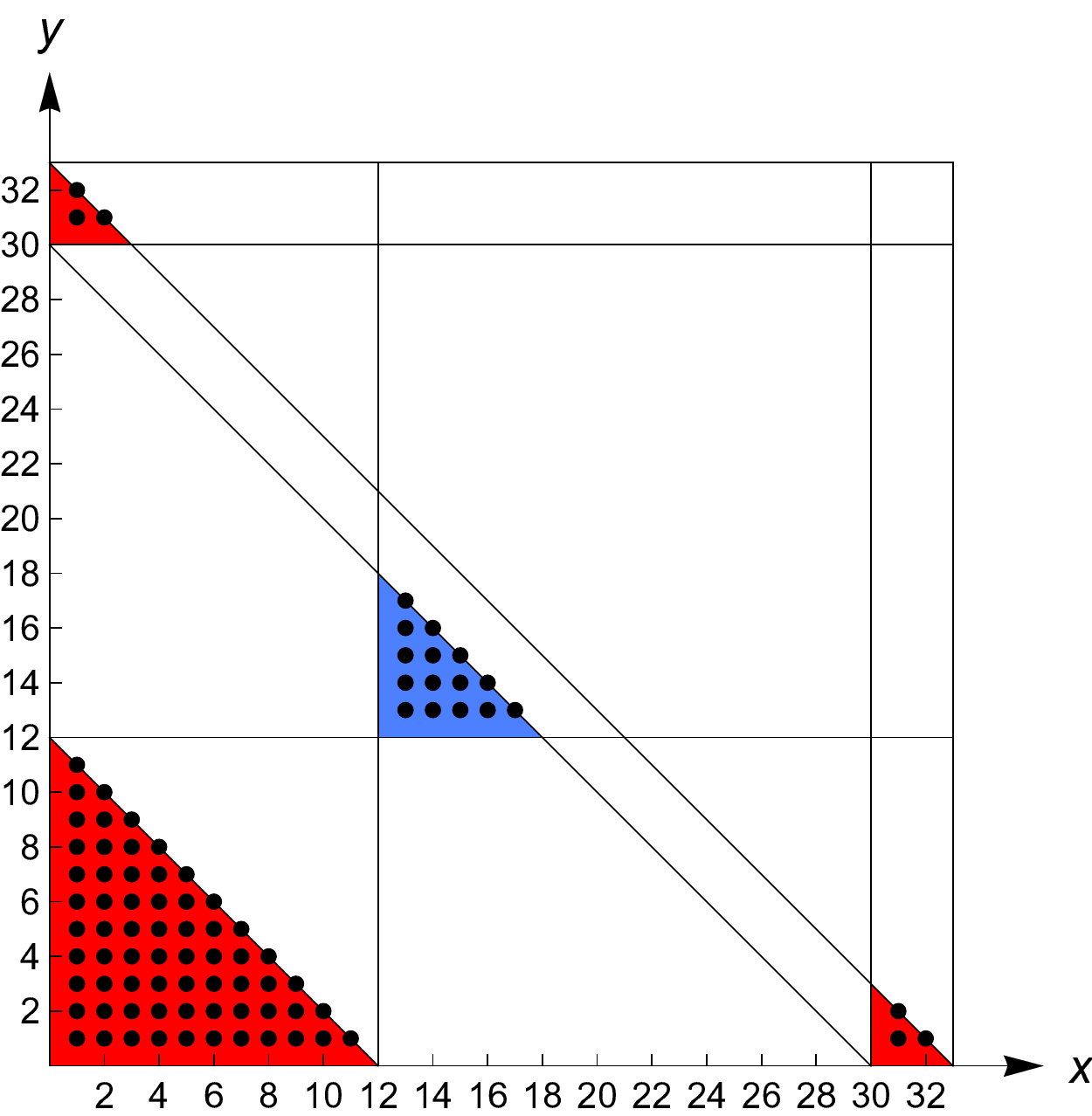}
  \end{center}
  \caption{All $\M(33)=87$ monochromatic Schur triples for $s=12$ and $t=30$
    with corresponding coloring
    $R^{12}B^{18}R^3$; each triple $(x,y,x+y)$ is represented by a dot at
    position $(x,y)$. The vertical lines are given by $x=s$, $x=t$, and $x=n$,
    the horizonal ones by $y=s$, $y=t$, and $y=n$. The three diagonal lines
    visualize the equations $x+y=s$, $x+y=t$, and $x+y=n$.}
  \label{fig:MMST33}
\end{figure}

It has been shown previously~\cite{RobertsonZeilberger98,Schoen99} that for
fixed~$n$ the number $\M(n,\chi)$ is minimized when $\chi$ consists of three
blocks of numbers with the same color (``runs''), i.e., when $\chi$ is of the form
$R^sB^{t-s}R^{n-t}$, where $s$ and $t$ are approximately $\frac{4}{11}n$ and
$\frac{10}{11}n$, respectively. In this section, we derive exact expressions
for the optimal choice of $s$ and~$t$, as well as for the corresponding
minimum~$\M(n)$.

\begin{lemma}\label{lem:NoMST}
  Let $n,s,t\in\N$ be such that $1\leq s\leq t\leq n$. Moreover, assume that the
  inequalities $t\geq 2s$ and $s\geq n-t$ hold. Then the number of monochromatic
  Schur triples on $[n]$ under the coloring $R^sB^{t-s}R^{n-t}$, denoted by
  $\M(n,s,t)$, is exactly
  \begin{equation}\label{eq:NoMST}
    \M(n,s,t) = \frac{s(s-1)}{2} + \frac{(t-2s)(t-2s-1)}{2} + (n-t)(n-t-1).
  \end{equation}
\end{lemma}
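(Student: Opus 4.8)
The plan is to count the monochromatic Schur triples $(x,y,x+y)$ directly by partitioning them according to which of the three color-blocks the entries fall into. Under the coloring $R^sB^{t-s}R^{n-t}$, the red integers are $\{1,\dots,s\}\cup\{t+1,\dots,n\}$ and the blue integers are $\{s+1,\dots,t\}$. A Schur triple is monochromatic iff $x$, $y$, and $x+y$ all lie in the red set, or all lie in the blue set. Since $x+y>x$ and $x+y>y$, the possibilities for the positions of $x,y,x+y$ along the number line are quite constrained, and the hypotheses $t\geq 2s$ and $s\geq n-t$ are exactly what is needed to rule out the awkward mixed cases.

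First I would handle the blue triples: we need $s+1\leq x,y$ and $x+y\leq t$. Re-indexing $x'=x-s$, $y'=y-s$ (so $x',y'\geq 1$), the condition $x+y\leq t$ becomes $x'+y'\leq t-2s$ (this is where $t\geq 2s$ guarantees the range is nonempty and that no blue value can reach back into the second red block, i.e.\ $x+y\leq t<t+1$). The number of pairs $(x',y')\in\N^2$ with $x'+y'\leq t-2s$ is $\binom{t-2s}{2}=\frac{(t-2s)(t-2s-1)}{2}$, giving the middle term. Next, the red triples split into two sub-cases. If all three of $x,y,x+y$ lie in the first red block $\{1,\dots,s\}$, we need $x+y\leq s$, contributing $\binom{s}{2}=\frac{s(s-1)}{2}$ pairs — the first term. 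The remaining red triples must have $x+y$ in the second red block $\{t+1,\dots,n\}$; then $x,y$ could each be in either red block, but the constraint $s\geq n-t$ forces $x,y\leq s$ (a value in $\{t+1,\dots,n\}$ is already larger than $n-t\leq s$, and its double exceeds $n$, so both summands must come from the first red block). So I must count pairs $(x,y)$ with $1\leq x,y\leq s$ and $t+1\leq x+y\leq n$; using $s\geq n-t$ one checks the upper constraint $x+y\leq n$ is automatic given $x,y\leq s$ when... — more carefully, the count of $(x,y)\in[s]^2$ with $x+y\geq t+1$ works out to $(n-t)(n-t-1)$ after using $t\geq 2s$ and $s\geq n-t$ to pin down the geometry of the triangular region; this yields the last term. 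Note that here ordered pairs with $x\neq y$ are counted twice, consistent with the paper's convention, which is why this term is $(n-t)(n-t-1)$ rather than a binomial.

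The main obstacle is the bookkeeping in the third case: making sure that the region $\{(x,y): 1\le x,y\le s,\ t+1\le x+y\le n\}$ is genuinely a triangle (not a truncated trapezoid) and that its lattice-point count is exactly $(n-t)(n-t-1)$ — this is precisely where both inequalities $t\geq 2s$ and $s\geq n-t$ are used, and getting the boundary cases ($x+y=t+1$ versus $x+y=n$, and whether $x=y$ is possible) right is the delicate part. Once the three regions are correctly identified and seen to be disjoint and exhaustive, summing the three counts gives \eqref{eq:NoMST}. I would also double-check consistency on a small example such as $\M(33,12,30)=87$ from \fig{MMST33}.
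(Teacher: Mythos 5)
Your handling of the blue triples and of the red triples contained entirely in the first block is correct and matches the paper's first two triangles. The problem is your third case. You claim the remaining red MSTs are those with $1\leq x,y\leq s$ and $t+1\leq x+y\leq n$, arguing that $s\geq n-t$ forces both $x$ and $y$ into the first red block. That region is in fact \emph{empty} under the hypothesis $t\geq 2s$: if $x,y\leq s$ then $x+y\leq 2s\leq t<t+1$. Your own sanity check exposes this: for $n=33$, $s=12$, $t=30$ your three counts give $66+15+0=81$, not $87$. The argument you give (``its double exceeds $n$'') only rules out \emph{both} of $x,y$ lying in the second red block $\{t+1,\dots,n\}$ (since then $x+y\geq 2t+2>n$, because $s\geq n-t$ together with $s\leq t$ gives $2t\geq n$); it does not rule out exactly one of them lying there, and those are precisely the triples you are missing.

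The correct third case --- the paper's two corner triangles --- consists of the red MSTs in which one of $x,y$ lies in $\{t+1,\dots,n\}$ and the other in $\{1,\dots,s\}$; the sum is then automatically at least $t+2$, hence red, provided $x+y\leq n$. Say $y=t+k$ with $1\leq k\leq n-t$ and $x\leq s$; the constraint $x+y\leq n$ gives $x\leq n-t-k$, and the hypothesis $s\geq n-t$ guarantees $n-t-k\leq s-1$, so the bound $x\leq s$ is never active and the count is $\sum_{k=1}^{n-t}(n-t-k)=\binom{n-t}{2}$. Doubling for the symmetric case ($x$ large, $y$ small) yields $(n-t)(n-t-1)$, the last term of \eqref{eq:NoMST}. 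So the role of $s\geq n-t$ is indeed to keep two corner regions triangular rather than trapezoidal, as you suspected, but for a different pair of triangles than the (empty) one you wrote down.
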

\begin{proof}
  In \fig{MMST33} the situation is depicted for $n=33$, $s=12$, and $t=30$.
  One sees that the dots representing the MSTs are arranged in four regions of
  right triangular shape.  The triangles arise as follows:
  \begin{enumerate}
  \item The dots in the lower left corner correspond to red MSTs,
    whose components are taken from the first block of red numbers; hence there
    are $s-1$ dots in the first row of this triangle.
  \item The central triangle contains all blue MSTs, whose first two
    components $(x,y)$ satisfy the inequalities $x>s$, $y>s$, and $x+y\leq t$.
    Note that such MSTs only exist if $t\geq 2s+2$ (for $t=2s+1$ and $t=2s$ the
    second term in~\eqref{eq:NoMST} vanishes and the formula is still correct).
    The number of dots on each side is therefore $t-2s-1$.
  \item The two triangles in the upper left and lower right corners correspond
    to red MSTs, whose first two entries belong to different blocks of red
    numbers.  By symmetry they have the same shape and they have $n-t-1$ dots
    on their sides. Here we use the condition $s\geq n-t$, because otherwise
    these two regions would no longer be triangles and we would be counting
    different things beyond the scope of our assumptions.
  \end{enumerate}
  Adding up the contributions from these three cases, one obtains the claimed
  formula.
\end{proof}

The optimal values for $s$ and $t$ are easily derived using the techniques of multivariable
calculus, once the form $R^sB^{t-s}R^{n-t}$ is assumed: by letting $n$ go to
infinity and by scaling the square $[0,n]^2\subset\R^2$ to the unit
square~$[0,1]^2$, we see that the portion of pairs $(x,y)\in[n]^2$ for which
$(x,y,x+y)$ is an MST among all pairs in $[n]^2$ equals
the area of a certain region in the unit square; for example, see the shaded regions in \fig{MMST33}. In this
limit process, the integers $s$ and $t$ turn into real numbers satisfying
$0\leq s\leq t\leq 1$. According to~\eqref{eq:NoMST} the area of the shaded
region in \fig{MMST33} is given by the formula
\[
  A(s,t) = \frac{s^2}{2} + \frac{(t - 2 s)^2}{2} + 2\cdot\frac{(1 - t)^2}{2} =
  \frac{5s^2}{2} + \frac{3t^2}{2} - 2st - 2t + 1.
\]
Equating the gradient
\[
  \left(\frac{\partial A}{\partial s}, \frac{\partial A}{\partial t}\right) =
  (5s-2t, 3t-2s-2)
\]
to zero, one immediately gets the location of the minimum
$(s,t)=\bigl(\frac{4}{11},\frac{10}{11}\bigr)$.

\begin{lemma}\label{lem:minMnst}
  For fixed $n\in\N$, the integers $s_0$ and $t_0$ that minimize the function
  $\M(n,s,t)$ are given by
  \[
    s_0=\Bigl\lfloor\frac{4n+2}{11}\Bigr\rfloor
    \qquad\text{and}\qquad
    t_0=\Bigl\lfloor\frac{10n}{11}\Bigr\rfloor.
  \]
\end{lemma}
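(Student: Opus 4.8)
The plan is to regard $\M(n,s,t)$, in the explicit polynomial form supplied by \eqref{eq:NoMST}, as a strictly convex quadratic function of the integer pair $(s,t)$, to locate its unique real minimizer, and then to pin down the integer minimizer one residue class of $n$ modulo $11$ at a time.

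First I would expand \eqref{eq:NoMST} into
\[
  \M(n,s,t)=\tfrac52 s^2-2st+\tfrac32 t^2+\tfrac12 s+\tfrac12 t-2nt+n^2-n,
\]
whose quadratic part has Hessian of determinant $11>0$ and is therefore positive definite. Equating the gradient $\bigl(5s-2t+\tfrac12,\,-2s+3t+\tfrac12-2n\bigr)$ to zero gives the real minimizer $(s^\ast,t^\ast)=\bigl(\tfrac{8n-5}{22},\tfrac{20n-7}{22}\bigr)$. Completing the square twice then yields the convenient form
\[
  \M(n,s,t)=C(n)+\tfrac{11}{6}\bigl(s-s^\ast\bigr)^2+\tfrac32\Bigl((t-t^\ast)-\tfrac23(s-s^\ast)\Bigr)^2
\]
for a constant $C(n)$ whose value is irrelevant below.

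The crucial observation is that $s_0$ and $t_0$ sit at a displacement from the real minimizer that depends only on $r\colonequal n\bmod 11$: setting $\varepsilon_1\colonequal s_0-s^\ast=\tfrac{9}{22}-\bigl\{\tfrac{4n+2}{11}\bigr\}$ and $\varepsilon_2\colonequal t_0-t^\ast=\tfrac{7}{22}-\bigl\{\tfrac{10n}{11}\bigr\}$, each is a function of $r$ alone and satisfies $|\varepsilon_1|,|\varepsilon_2|<1$. Substituting $s=s_0+a$, $t=t_0+b$ into the squared form and subtracting $\M(n,s_0,t_0)$, the constant $C(n)$ and all remaining dependence on $n$ cancel, leaving $\M(n,s_0+a,t_0+b)-\M(n,s_0,t_0)=D_r(a,b)$ with
\[
  D_r(a,b)=\tfrac{11}{6}(a+\varepsilon_1)^2+\tfrac32\bigl((b+\varepsilon_2)-\tfrac23(a+\varepsilon_1)\bigr)^2-\tfrac{11}{6}\varepsilon_1^2-\tfrac32\bigl(\varepsilon_2-\tfrac23\varepsilon_1\bigr)^2.
\]
It then suffices to verify $D_r(a,b)\ge 0$ for all $(a,b)\in\Z^2$, separately for each of the eleven residues $r$. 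As $D_r$ has positive-definite quadratic part $\tfrac52 a^2-2ab+\tfrac32 b^2$ and bounded linear part, its sublevel set $\{D_r\le 0\}$ is a small bounded ellipse, so each verification reduces to inspecting a handful of lattice points; this also shows $(s_0,t_0)$ is a global minimizer over all of $\Z^2$, hence over the region of validity of \eqref{eq:NoMST}.

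The main obstacle is precisely this finite check, and it is what makes the result genuinely exact rather than a matter of naive rounding: because the quadratic form is skew, $(s_0,t_0)$ need \emph{not} be the lattice point nearest $(s^\ast,t^\ast)$ --- for $n\equiv 1\pmod{11}$, for instance, $t^\ast$ is closer to $t_0+1$ than to $t_0$ --- so one must genuinely confirm, residue class by residue class, that none of the finitely many neighbouring lattice points beats $(s_0,t_0)$. The only remaining point is the easy verification that $(s_0,t_0)$ lies in the domain where \eqref{eq:NoMST} applies, namely $1\le s_0\le t_0\le n$, $t_0\ge 2s_0$ and $s_0\ge n-t_0$; plugging in the residue-wise closed forms, these hold for all sufficiently large $n$, and the finitely many small values of $n$ are handled directly.
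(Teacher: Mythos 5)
Your argument is correct and takes essentially the same route as the paper: reduce to the eleven residue classes of $n$ modulo $11$ and show that a positive-definite quadratic in the integer displacement from $(s_0,t_0)$ is nonnegative on $\Z^2$, the key point in both proofs being that the sublevel set where the quadratic goes negative is a small ellipse containing no lattice point that beats $(s_0,t_0)$ (and, like the paper, you establish minimality only on the region where \eqref{eq:NoMST} is valid, deferring the unrestricted claim). The only substantive difference is that where the paper certifies nonnegativity outside a small box of explicitly checked lattice points by cylindrical algebraic decomposition, your completion of squares together with $|\varepsilon_1|,|\varepsilon_2|<1$ bounds the ellipse directly, making the finite check elementary and treating all eleven residues uniformly.
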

\begin{proof}
  Strictly speaking, we prove the minimality of the function $\M(n,s,t)$
  under the additional assumption $t\geq 2s\land s\geq n-t$ from \lem{NoMST}.
  The fact that this is also the global minimum for all $1\leq s\leq t\leq n$
  follows as a special case from the more general discussion as described in
  the proof of \lem{min}.
  
  The statement is proven by case distinction into $11$ cases, according to
  the remainder $n$ modulo~$11$. Here we show details for the case
  $n=11k+5$, and the remaining cases can be similarly verified 
  with a computer; for these cases we refer the reader to the
  accompanying electronic material~\cite{KoutschanWong19}.

  By setting $n=11k+5$ we can eliminate the floors from the definitions of $s_0$
  and~$t_0$; we obtain $s_0=\lfloor \frac{1}{11}(4n+2)\rfloor=4k+2$ and
  $t_0=\lfloor\frac{10}{11}n\rfloor=10k+4$. Our goal is to show that among all
  integers $i,j\in\Z$ the expression $\M(n,s_0+i,t_0+j)$ is minimal for $i=j=0$.
  Using~\eqref{eq:NoMST} one gets
  \[
    \M(11k+5, 4k+2+i, 10k+4+j) =
    \frac12 \bigl(2 + 5 i + 5 i^2 - 3 j - 4 i j + 3 j^2 + 12 k + 22 k^2\bigr).
  \]
  The stated goal is equivalent to showing that the polynomial
  \[
    p(i,j) = 5 i + 5 i^2 - 3 j - 4 i j + 3 j^2
  \]
  is nonnegative for all $(i,j)\in\Z^2$. Such a task can, in principle, be
  routinely executed by cylindrical algebraic decomposition
  (CAD)~\cite{Collins75}. In this method, the variables $i$ and $j$ are
  treated as real variables, which causes some problems in the present
  application. The reason is that $p(i,j)\geq0$ does not hold for all
  $i,j\in\R$.  The situation is depicted in \fig{MinMij}, where the ellipse
  represents the zero set of $p(i,j)$ and its inside consists of values $(i,j)$ for
  which the polynomial~$p(i,j)$ is negative. To our relief, we see that no
  integer lattice points lie inside the ellipse, since such points would be
  counterexamples to our claim.

  Our strategy now is the following: we prove that $p(i,j)\geq0$ for all
  integer points that are close to $(0,0)$, e.g., for all $(i,j)$ with $-2\leq
  i\leq2$ and $-2\leq j\leq2$. These points are shown in \fig{MinMij}, with
  the respective value of $p(i,j)$ attached to them. In particular, we see
  that the minimum $p(i,j)=0$ is attained several times, namely on the three
  points that lie exactly on the boundary of the ellipse.

  Then we invoke cylindrical algebraic decomposition on the formula
  \begin{equation}\label{eq:cadinput}
    \forall i,j\in\R\colon
    (-2 \leq i \leq 2 \land -2 \leq j \leq 2) \lor p(i,j) \geq 0,
  \end{equation}
  which states that if the point $(i,j)$ lies outside the square that we have
  already considered, then $p(i,j)\geq0$ holds. Calling the Mathematica
  command \texttt{CylindricalDecomposition} with input~\eqref{eq:cadinput}, we
  immediately get \texttt{True}.
\end{proof}

\begin{figure}[t]
  \begin{center}
    \includegraphics[width=0.4\textwidth]{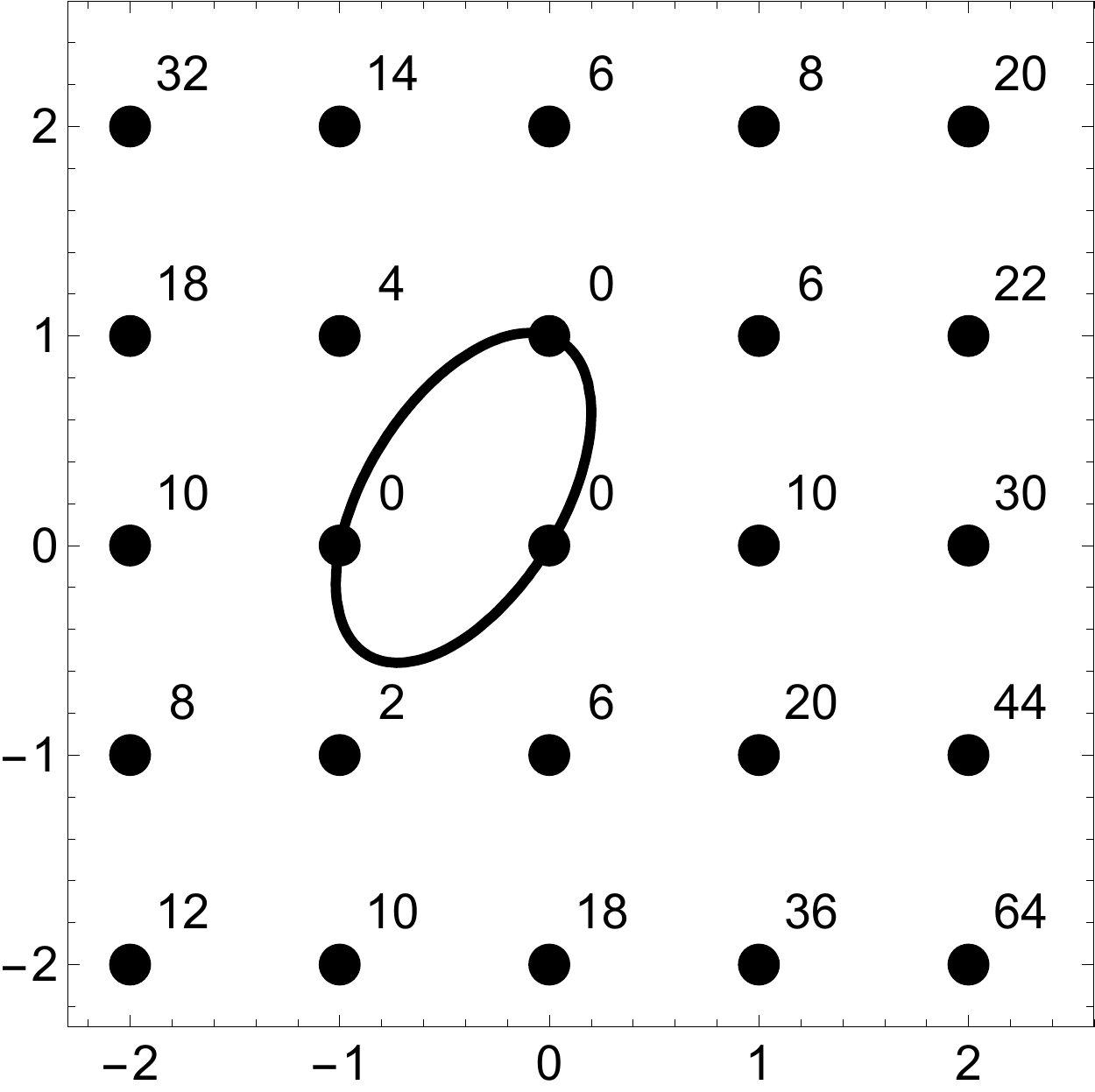}
  \end{center}
  \caption{Zero set of the polynomial $p(i,j)$ from \lem{minMnst} and its
    values at integer lattice points $(i,j)\in\Z^2$.}
  \label{fig:MinMij}
\end{figure}

We are ready to state the main theorem of this section, which is an exact
formula for the minimal number of MSTs for any $2$-coloring of~$[n]$. Apart
from the asymptotic results mentioned in \sec{intro}, there is only one
paper~\cite{Schoen99} where a similar result is stated, but only for the case $n=22k$ and for Schur triples $(x,y,x+y)$ with $x\leq y$.
In contrast, we consider all $x,y\in[n]$ and our formula holds for all
$n\in\N$.

\begin{theorem}\label{thm:minMn}
  The minimal number of monochromatic Schur triples that can be attained
  under any $2$-coloring of $[n]$ is
  \[
    \M(n) = \Bigl\lfloor\frac{n^2 - 4 n + 6}{11}\Bigr\rfloor.
  \]
\end{theorem}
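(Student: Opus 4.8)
**

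The plan is to combine \lem{minMnst} with \lem{NoMST} to evaluate the minimum explicitly and then to do bookkeeping modulo $11$. By \lem{minMnst}, the minimizing coloring is $R^{s_0}B^{t_0-s_0}R^{n-t_0}$ with $s_0=\lfloor(4n+2)/11\rfloor$ and $t_0=\lfloor 10n/11\rfloor$, so $\M(n)=\M(n,s_0,t_0)$, provided the hypotheses $t_0\geq 2s_0$ and $s_0\geq n-t_0$ of \lem{NoMST} are satisfied (which one checks directly from the floor expressions; this is also implicitly guaranteed by the preceding lemmas). It then remains to plug $s_0$ and $t_0$ into the closed form~\eqref{eq:NoMST} and verify that the result equals $\lfloor(n^2-4n+6)/11\rfloor$.

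Since the floors in $s_0$ and $t_0$ depend only on $n\bmod 11$, the natural route is an $11$-way case distinction, writing $n=11k+r$ for $r\in\{0,1,\dots,10\}$. For each fixed $r$, the floors disappear: $s_0$ and $t_0$ become explicit linear polynomials in $k$, so $\M(n,s_0,t_0)$ becomes an explicit quadratic polynomial in $k$ by~\eqref{eq:NoMST}. On the other side, $n^2-4n+6=(11k+r)^2-4(11k+r)+6$, and one separates out the part divisible by $11$ to write $\lfloor(n^2-4n+6)/11\rfloor = 11k^2 + (2r-4)k + \lfloor(r^2-4r+6)/11\rfloor$; the constant $\lfloor(r^2-4r+6)/11\rfloor$ is just a small table indexed by $r$. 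Matching the two quadratics in $k$ for each of the eleven residues finishes the proof, and this is exactly the kind of routine verification the authors delegate to the accompanying Mathematica notebook~\cite{KoutschanWong19}; in the write-up I would display the computation in full for one representative residue (say $r=5$, consistent with the proof of \lem{minMnst}) and refer to the electronic material for the rest.

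The only genuine subtlety — and the place I would be most careful — is the interaction between the two floor functions. Writing $n=11k+r$ eliminates the ambiguity, but one must make sure that the resulting linear formulas for $s_0,t_0$ are the ones actually produced by $\lfloor(4n+2)/11\rfloor$ and $\lfloor 10n/11\rfloor$ for that residue, and that they indeed satisfy $1\le s_0\le t_0\le n$, $t_0\ge 2s_0$, and $s_0\ge n-t_0$ so that \eqref{eq:NoMST} applies without modification. There are also small-$n$ edge cases (e.g.\ $n=1,2$, where $n^2-4n+6$ could behave unexpectedly under the floor) that should be checked by hand against the definition $\M(n)=\min_\chi\M(n,\chi)$. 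Once the hypotheses are confirmed uniformly in $k$, the rest is the polynomial identity check described above, and no further ideas are needed.
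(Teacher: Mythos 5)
Your proposal is correct and follows essentially the same route as the paper: an $11$-way case distinction $n=11k+\ell$, substitution of $s_0=\lfloor(4n+2)/11\rfloor$ and $t_0=\lfloor 10n/11\rfloor$ into \eqref{eq:NoMST}, and a polynomial identity check per residue. The only (cosmetic) difference is that the paper verifies the floor formula by showing each case equals $\tfrac{1}{11}(n^2-4n)+\delta_\ell$ with all $\delta_\ell$ in $[-\tfrac{1}{11},\tfrac{6}{11}]$, i.e.\ a window of length less than $1$, whereas you match the quadratic in $k$ against $11k^2+(2\ell-4)k+\lfloor(\ell^2-4\ell+6)/11\rfloor$ directly; these are equivalent computations.
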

\begin{proof}
  As in \lem{minMnst}, we argue by case distinction $n=11k+\ell$,
  $0\leq\ell\leq10$. Using $s_0=\lfloor \frac{1}{11}(4n+2)\rfloor$ and
  $t_0=\lfloor\frac{10}{11}n\rfloor$ from the lemma, we obtain the following
  values for $\M(n,s_0,t_0)$:
  \begin{alignat*}{3}
    \ell=0\colon & \M(11 k,4 k,10 k) &&= 11 k^2-4 k &&=
    \tfrac{1}{11}(n^2-4 n) \\
    \ell=1\colon & \M(11 k+1,4 k,10 k) &&= 11 k^2-2 k &&=
    \tfrac{1}{11}(n^2-4 n+3) \\
    \ell=2\colon & \M(11 k+2,4 k,10 k+1) &&= 11 k^2 &&=
    \tfrac{1}{11}(n^2-4 n+4) \\
    \ell=3\colon & \M(11 k+3,4 k+1,10 k+2) &&= 11 k^2+2 k &&=
    \tfrac{1}{11}(n^2-4 n+3) \\
    \ell=4\colon & \M(11 k+4,4 k+1,10 k+3) &&= 11 k^2+4 k &&=
    \tfrac{1}{11}(n^2-4 n) \\
    \ell=5\colon & \M(11 k+5,4 k+2,10 k+4) &&= 11 k^2+6 k+1 &&=
    \tfrac{1}{11}(n^2-4 n+6) \\
    \ell=6\colon & \M(11 k+6,4 k+2,10 k+5) &&= 11 k^2+8 k+1 &&=
    \tfrac{1}{11}(n^2-4 n-1) \\
    \ell=7\colon & \M(11 k+7,4 k+2,10 k+6) &&= 11 k^2+10 k+2 &&=
    \tfrac{1}{11}(n^2-4 n+1) \\
    \ell=8\colon & \M(11 k+8,4 k+3,10 k+7) &&= 11 k^2+12 k+3 &&=
    \tfrac{1}{11}(n^2-4 n+1) \\
    \ell=9\colon & \M(11 k+9,4 k+3,10 k+8) &&= 11 k^2+14 k+4 &&=
    \tfrac{1}{11}(n^2-4 n-1) \\
    \ell=10\colon & \M(11 k+10,4 k+3,10 k+9) &&= 11 k^2+16 k+6 &&=
    \tfrac{1}{11}(n^2-4 n+6)
  \end{alignat*}
  One easily observes that in each case, the result is of the form
  $\tfrac{1}{11}(n^2-4 n)+\delta_{\ell}$, where $-\tfrac{1}{11}\leq\delta_{\ell}\leq\tfrac{6}{11}$
  holds for all~$\ell$. Hence the claimed formula follows.
\end{proof}
The first $25$ terms of the sequence $\bigl(\M(n)\bigr)_{n\geq1}$ are
\[
  0, 0, 0, 0, 1, 1, 2, 3, 4, 6, 7, 9, 11, 13, 15, 18, 20, 23, 26, 29,
  33, 36, 40, 44, 48, \dots
\]
We have added this sequence to the Online Encyclopedia of Integer
Sequences~\cite{Sloane} under the number A321195.

\vspace{0.8cm}

\section{\hspace{-4pt} Asymptotic lower bound for generalized Schur triples}
\label{sec:real}

\vspace{1mm}\noindent

We now turn to generalized Schur triples, i.e., triples $(x,y,z)$ subject to
$z=x+ay$ for some parameter $a\in\N$, as studied by Thanatipanonda and
Wong~\cite{ThanatipanondaWong17}. Here, we allow $a$ to be even more general,
i.e., $a\in\R^+$. Consequently, we have to adapt the definition of generalized
Schur triples: we use the condition $z=x+\lfloor ay\rfloor$. The case $a<0$
does not add new aspects to the analysis, as it can be transformed to the $a>0$
case by exchanging the roles of $x$ and $z$ and by changing the floor function
to a ceiling.

Again, we choose to use the assumption that the minimal number of
monochromatic generalized Schur triples (MGSTs) occurs at a coloring in the
form of three blocks $R^sB^{t-s}R^{n-t}$. We justify using this assumption
with the experimental evidence of Butler, Costello, and
Graham~\cite{ButlerCostelloGraham10} (who argued for the generalized Schur
triple case $a>1$) and adapting the intuition in the argument of
Schoen~\cite{Schoen99} (who only argued for the Schur triple case $a=1$).

We would like to know for which choice of $s$ and $t$ (depending on~$n$
and~$a$) the minimum occurs. Similar to the previous section, we let $n$ go to
infinity and correlate the number of MGSTs with the area of polygonal regions
in the unit square. We then define a function $A(s,t,a)$ that determines this
area, and minimize it. Hence, throughout this section, $s$ and $t$ are real
numbers with $0\leq s\leq t\leq1$.

\fig{MGSTarea} shows two situations for different choices of $a,s,t$.  In
contrast to the previous section, we do a very careful case analysis and do
not impose extra conditions on $s$ and~$t$ as in \lem{NoMST}, at the cost of
introducing a ``few'' more case distinctions. The full case analysis for normal
Schur triples then follows by specializing to $a=1$ in the resulting formulas.

\begin{figure}[t]
  \begin{center}
    \includegraphics[width=0.4\textwidth]{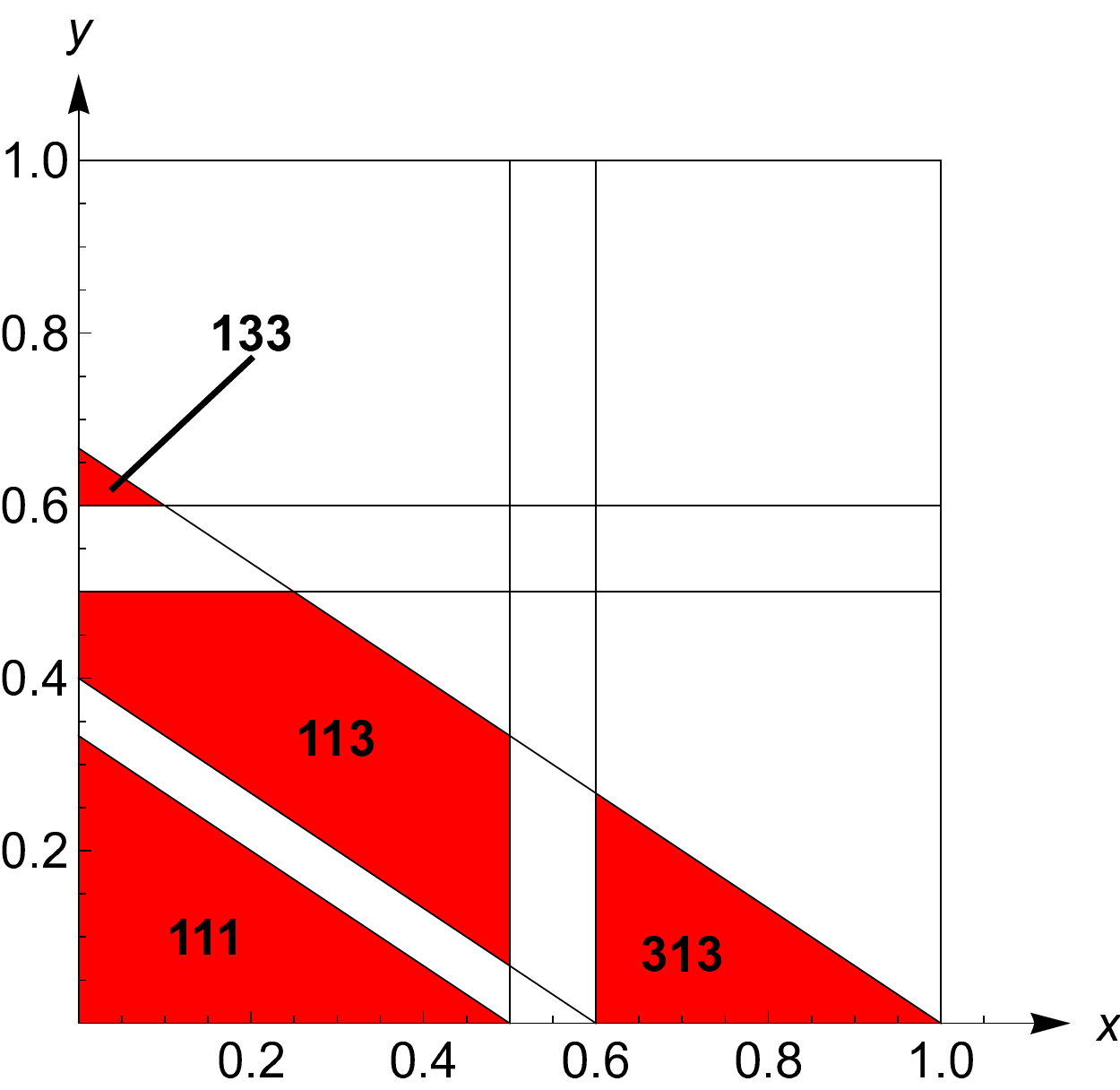}
    \qquad\qquad
    \includegraphics[width=0.4\textwidth]{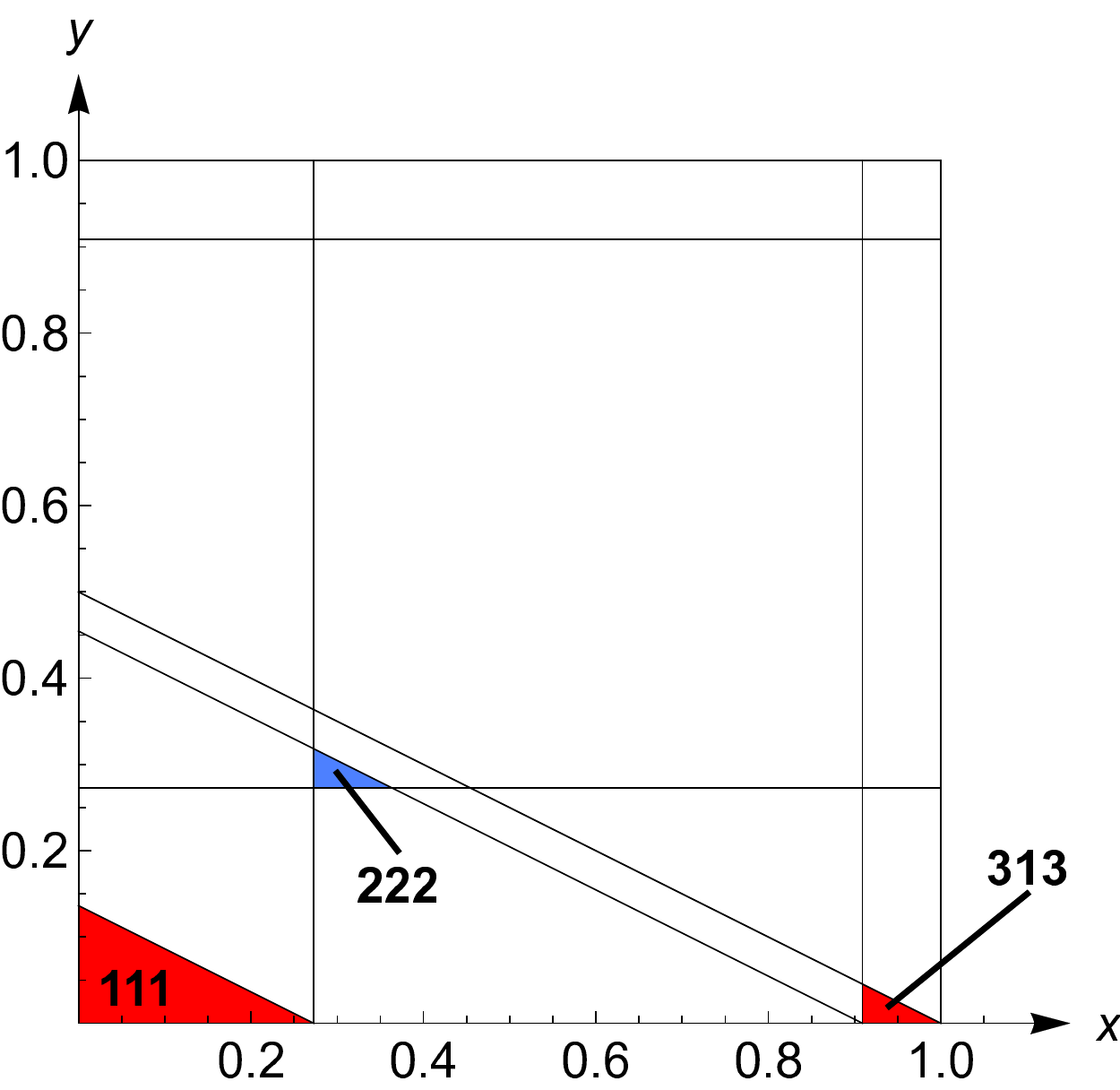}  
  \end{center}
  \caption{Regions (in red and blue) corresponding to monochromatic generalized Schur
    triples for $a=\frac32$, $s=\frac12$, $t=\frac35$ (left) and
    $a=2$, $s=\frac{3}{11}$, $t=\frac{10}{11}$ (right); their area being measured
    by $A(s,t,a)$ from \lem{MGSTarea}.}
  \label{fig:MGSTarea}
\end{figure}

In the process of analyzing the different cases, we encounter several
conditions on $a,s,t$. For our referencing convenience, we
distinguish these conditions here using the following abbreviations:
\begin{equation}\label{eq:pwconds}
  \begin{aligned}
    C_1 &\;\equiv\; 1-as\geq 0, & \qquad\qquad
    C_2 &\;\equiv\; 1-as-s\geq 0, \\
    C_3 &\;\equiv\; 1-as-t\geq 0, &
    C_4 &\;\equiv\; t-as\geq 0, \\
    C_5 &\;\equiv\; t-as-s\geq 0, &
    C_6 &\;\equiv\; 1-at\geq 0, \\
    C_7 &\;\equiv\; 1-at-s\geq 0, &
    C_8 &\;\equiv\; 1-at-t\geq 0, \\
    C_9 &\;\equiv\; 1-a\geq 0, &
    C_{10} &\;\equiv\; 1-a-s\geq 0, \\
    C_{11} &\;\equiv\; s-a\geq 0, &
    C_{12} &\;\equiv\; 1-a-t\geq 0, \\
    C_{13} &\;\equiv\; t-a\geq 0, &
    C_{14} &\;\equiv\; t-a-s\geq 0, \\
    C_{15} &\;\equiv\; s-a t\geq 0, &
    C_{16} &\;\equiv\; t-a t-s\geq 0.
  \end{aligned}
\end{equation}

In Figures~\ref{fig:pwregions1} and~\ref{fig:pwregions2}, the lines that
represent some of these conditions are depicted. They split the triangle
$0\leq s\leq t\leq1$ into several regions, depending on the value of~$a$.

\begin{lemma}\label{lem:MGSTarea}
  Let $a,s,t\in\R$ with $a>0$ and $0\leq s\leq t\leq1$. Then the area
  $A(s,t,a)$ of the region
  \[
    \bigl\{(x,y)\in\R^2 :
    (x,y,x+ay)\in\bigl([0,s]\cup(t,1]\bigr){}^3 \lor (x,y,x+ay)\in(s,t]^3 \bigr\}
  \]
  is given by a piecewise defined function, where $70$
  case distinctions have to be made.  For the sake of brevity, only the first $17$ cases are
  listed below, since they will be the most important ones in the subsequent
  analysis; in fact they are sufficient to describe $A(s,t,a)$ for
  $a\geq1$. We label the region corresponding to the $i$-th case as $(R_i)$.
  They are expressed in terms of the conditions \eqref{eq:pwconds}
  (where overlines denote negations):
  \[
  \begin{array}{@{}l@{\hspace{0.2cm}}l@{\hspace{0.6cm}}l@{}}
    & \text{conditions on } a,s,t & A(s,t,a) \\ \hline \rule{0pt}{16pt}
 (R_1) &
   \overline{C_1} &
   \frac{s^2-2 t s+2 s+t^2-2 t+1}{2 a} \\[1.5ex]
 (R_2) &
   C_3\land C_4\land \overline{C_6} &
   \frac{2 a s^2+2 s^2+2 a s-4 a t s-2 t s+t^2}{2 a} \\[1.5ex]
 (R_3) &
   C_3\land \overline{C_4}\land \overline{C_6} &
   \frac{-a^2 s^2+2 a s^2+2 s^2+2 a s-2 a t s-2 t s}{2 a} \\[1.5ex]
 (R_4) &
   \overline{C_2}\land C_4\land \overline{C_6} &
   \frac{s^2+2 a s-2 a t s-2 t s+2 s+2 t^2-2 t}{2 a} \\[1.5ex]
 (R_5) &
   \overline{C_2}\land \overline{C_4}\land C_6 &
   \frac{-a^2 s^2+s^2+2 a s-2 t s+2 s+a^2 t^2+t^2-2 a t-2 t+1}{2 a} \\[1.5ex]
 (R_6) &
   C_1\land \overline{C_2}\land \overline{C_4}\land \overline{C_6} &
   \frac{-a^2 s^2+s^2+2 a s-2 t s+2 s+t^2-2 t}{2 a} \\[1.5ex]
 (R_7) &
   C_2\land \overline{C_3}\land C_4\land \overline{C_6} &
   \frac{a^2 s^2+2 a s^2+2 s^2-2 a t s-2 t s+2 t^2-2 t+1}{2 a} \\[1.5ex]
 (R_8) &
   C_2\land \overline{C_3}\land \overline{C_4}\land C_6 &
   \frac{2 a s^2+2 s^2-2 t s+a^2 t^2+t^2-2 a t-2 t+2}{2 a} \\[1.5ex]
 (R_9) &
   C_2\land \overline{C_3}\land \overline{C_4}\land \overline{C_6} &
   \frac{2 a s^2+2 s^2-2 t s+t^2-2 t+1}{2 a} \\[1.5ex]
 (R_{10}) &
   C_3\land C_4\land C_6\land \overline{C_7} &
   \frac{2 a s^2+2 s^2+2 a s-4 a t s-2 t s+a^2 t^2+t^2-2 a t+1}{2 a} \\[1.5ex]
 (R_{11}) &
   C_3\land \overline{C_4}\land C_6\land \overline{C_7} &
   \frac{-a^2 s^2+2 a s^2+2 s^2+2 a s-2 a t s-2 t s+a^2 t^2-2 a t+1}{2 a} \\[1.5ex]
 (R_{12}) &
   \overline{C_4}\land C_8 &
   \frac{(1+2a-a^2) s^2+2 s (1-2 a t+a-t)+(a t+t-1)^2}{2 a} \\[1.5ex]
 (R_{13}) &
   \overline{C_4}\land C_7\land \overline{C_8} &
   \frac{-a^2 s^2+2 a s^2+s^2+2 a s-4 a t s-2 t s+2 s}{2 a} \\[1.5ex]
 (R_{14}) &
   C_4\land C_8\land \overline{C_9} &
   \frac{(a^2+2a+2) t^2-2 t (3 a s+a+s+1)+(s+1) (2 a s+s+1)}{2 a} \\[1.5ex]
 (R_{15}) &
   C_4\land C_7\land \overline{C_8}\land \overline{C_9} &
   \frac{2 a s^2+s^2+2 a s-6 a t s-2 t s+2 s+t^2}{2 a} \\[1.5ex]
 (R_{16}) &
   \overline{C_2}\land C_4\land C_6\land \overline{C_9} &
   \frac{s^2+2 a s-2 a t s-2 t s+2 s+a^2 t^2+2 t^2-2 a t-2 t+1}{2 a} \\[1.5ex]
 (R_{17}) &
   C_2\land \overline{C_3}\land C_4\land C_6\land \overline{C_9} &
   \frac{a^2 s^2+2 a s^2+2 s^2-2 a t s-2 t s+a^2 t^2+2 t^2-2 a t-2 t+2}{2 a}
  \end{array}
  \]
\end{lemma}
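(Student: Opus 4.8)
The plan is to compute the area $A(s,t,a)$ directly as a sum of areas of polygonal pieces, parametrized by the relative positions of the constraint lines, and then collate the resulting formulas into the piecewise table. First I would observe that the region in question is a union of two pieces that intersect only in a set of measure zero: the ``red'' part, consisting of all $(x,y)$ with $x,y,x+ay$ all lying in $[0,s]\cup(t,1]$, and the ``blue'' part, consisting of all $(x,y)$ with $x,y,x+ay$ all in $(s,t]$. Since $a>0$, the condition that three linear expressions lie in prescribed intervals defines (in each case) an intersection of half-planes, hence a convex polygon; so $A(s,t,a)$ is a finite sum of areas of such polygons, each of which is a piecewise polynomial in $s,t$ and $1/a$. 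I would treat $x,y$ as the plane variables: the blue region is $\{s<x\le t,\; s<y\le t,\; s<x+ay\le t\}$, and the red region decomposes along whether each of $x,y,x+ay$ is in the low block $[0,s]$ or the high block $(t,1]$ — a priori $2^3=8$ sub-cases, but several are empty or coincide, e.g.\ one cannot have $x,y$ both in $[0,s]$ while $x+ay>t$ unless $s$ is large, which is exactly what the conditions $C_i$ track.

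Next I would identify precisely which of the conditions \eqref{eq:pwconds} are the ones that change the combinatorial type of these polygons. The lines $1-as=0$, $1-at=0$, $t-as=0$, $s-at=0$, $t-as-s=0$, etc., are exactly the loci where a vertex of one of the polygons crosses a boundary of the unit square or where two defining lines swap order; as $(s,t)$ moves across one of these lines, a vertex appears or disappears and the area formula changes. So the strategy is: enumerate the finitely many sign patterns of $C_1,\dots,C_{16}$ that are actually realizable inside the triangle $0\le s\le t\le 1$ for some $a>0$ (many sign patterns are infeasible, which is why $70$ cases survive rather than $2^{16}$); in each feasible cell compute the vertex set of every polygonal piece by intersecting the relevant pairs of lines, and apply the shoelace formula; sum over pieces to get $A(s,t,a)$ on that cell. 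For $a\ge1$ many of the conditions $C_9,\dots,C_{16}$ are automatically determined ($C_9\equiv 1-a\ge0$ fails, etc.), which is why only the first $17$ cases are needed there and why I would organize the table so those come first.

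The bookkeeping is the real obstacle, not any single computation: each individual area is an elementary polygon-area calculation, but one must (i) correctly enumerate the realizable cells — this is itself a cylindrical-algebraic-decomposition-style task in the three variables $a,s,t$, and is exactly the kind of thing the accompanying Mathematica notebook~\cite{KoutschanWong19} handles — and (ii) verify consistency, namely that adjacent cells give formulas that agree on their common boundary (so that $A$ is continuous, as it must be, being an area). I would use the latter as the main sanity check: continuity across each wall $C_i=0$ is equivalent to the algebraic identity obtained by substituting the corresponding linear relation into the two neighboring formulas, and checking all such identities certifies the table. I would also cross-check low-complexity limits: for instance substituting $a=1$ should reproduce the area $A(s,t)=\frac{5s^2}{2}+\frac{3t^2}{2}-2st-2t+1$ from \sec{schur} on the cell containing $(s,t)=(\tfrac4{11},\tfrac{10}{11})$, and the pictures in \fig{MGSTarea} give concrete numerical checks for $a=\tfrac32$ and $a=2$. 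Given the sheer number of cases, I would relegate the exhaustive verification to the electronic supplement and present in the paper only the $17$ cases that drive the subsequent minimization for $a\ge1$, exactly as the statement does.
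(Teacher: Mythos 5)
Your proposal follows essentially the same route as the paper: decompose the region by which color block each of $x$, $y$, $x+ay$ falls into (the paper's seven cases $111,222,113,131,133,313,333$), compute each resulting convex polygon's area with case distinctions governed by the conditions $C_1,\dots,C_{16}$, and then take a common refinement of the feasible sign patterns (the paper prunes $2^{16}$ combinations down to $114$ cells and merges to $70$) with the heavy bookkeeping delegated to the computer. Your added sanity checks (continuity of $A$ across each wall $C_i=0$, specialization to $a=1$) are sensible supplements but do not change the argument.
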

\begin{proof}
  As can be seen in \fig{MGSTarea}, the region whose area we would like to
  determine is the union of several polygons. Let $I_1=[0,s]$, $I_2=(s,t]$, and
  $I_3=(t,1]$ denote the intervals that correspond to the different blocks of
  the coloring ($I_1$ and $I_3$ being red and $I_2$ being blue).  Then $x,y\in
  I_1 \land x+ay\in I_3$ is allowed while $x,y\in I_1 \land x+ay\in I_2$ is
  not. From this point on, we will refer to the case $(x,y,x+ay)\in I_i\times I_j\times I_k$ by
  $ijk$. It is easy to see that we have to consider only seven cases: $111$,
  $222$, $113$, $131$, $133$, $313$, $333$.  The cases $311$ and $331$ are
  clearly impossible since $x\geq t$ contradicts $x+ay\leq s$. All other
  combinations of $1,2,3$ violate the monochromatic coloring condition.

  In both parts of \fig{MGSTarea}, case $111$ corresponds to the triangle that
  touches the origin.  The coordinates of its other two vertices are $(s,0)$
  and $(0,\frac{s}{a})$, hence its area is $\frac12\cdot
  s\cdot\frac{s}{a}$. However, this is valid only for $a\geq1$. If $a<1$, then
  the point $(0,\frac{s}{a})$ is above the line $y=s$ and so the top of the
  triangle is cut off. As a result, one obtains a quadrilateral with vertices
  $(0,0)$, $(s,0)$, $(s-as,s)$, $(0,s)$, whose area is given by $\frac12\cdot
  s\cdot(2s-as)$.

  The case $222$ is similar, with the difference being that the corresponding
  polygon disappears if $\frac{t-s}{a}<s$; in the right part of
  \fig{MGSTarea} the polygon $222$ is present while in the left part it is
  not. The polygons $313$, $333$, and $131$ are characterized by comparably
  simple case distinctions, while $133$ and $113$ require a much more involved
  analysis. In \fig{cases133}, we present such an analysis for $133$, and
  refer to the accompanying electronic material~\cite{KoutschanWong19} for $113$.

  What we have achieved so far is a representation of $A(s,t,a)$ as a sum of
  seven piecewise functions. However, what is required is a representation of
  $A(s,t,a)$ as a single piecewise function, since that will be needed for
  determining the location of the minimum.

  The conditions that are used to characterize the different pieces in
  \fig{cases133} (and in the remaining cases that have not
  been discussed explicitly), are listed in~\eqref{eq:pwconds}. In order to combine the
  seven piecewise functions, we need a common refinement of the regions on
  which they are defined. We start with the finest possible refinement, which
  is obtained by considering all $2^{16}=65536$ logical combinations
  of $C_i$ and $\overline{C_i}$ for $1\leq i\leq16$.
  Using Mathematica's simplification procedures, we
  remove those cases that contain contradictory combinations of conditions,
  such as $C_1\land\overline{C_2}$ for example. After this purging, we are
  left with a subdivision of the set
  \begin{equation}\label{eq:prism}
    \{(s,t,a) : 0\leq s\leq t\leq 1 \land a\geq0\} \subset \R^3,
  \end{equation}
  which is an infinite triangular prism, into $114$ polyhedral
  regions. Finally, we merge regions on which $A(s,t,a)$ is defined by the
  same expression into a single region, yielding a representation of
  $A(s,t,a)$ as a piecewise function defined by $70$ different expressions.
  Each of them is of the form $\frac{1}{a}p(s,t,a)$ where $p$ is a polynomial
  in $s,t,a$ of degree at most~$2$ in each of the variables. For more details,
  and to see the definition of $A(s,t,a)$ in its full glory,
  see the accompanying electronic material~\cite{KoutschanWong19}.
\end{proof}

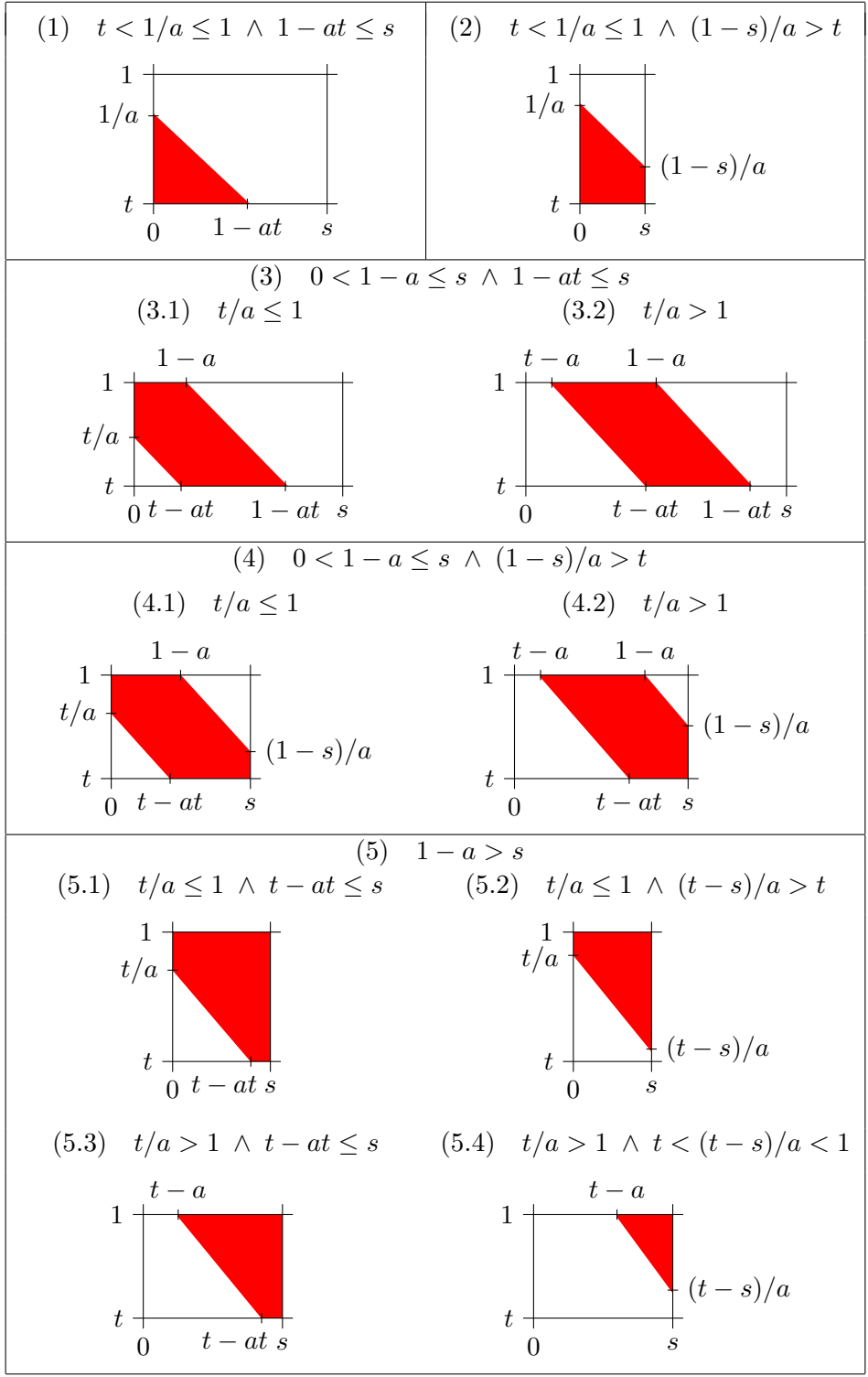
\begin{figure}
\centering

\begin{tabular}{|cc|}

\hline

\multicolumn{1}{|c|}{\rule{160pt}{0pt}} & \rule{160pt}{0pt} \\[-2ex]

\multicolumn{1}{|c|}{
$(1) \quad  t < 1/a \leq 1 \;\land\; 1-at \leq s$ } &
$(2) \quad t < 1/a \leq 1 \;\land\; (1-s)/a > t$
\\[1ex]
\multicolumn{1}{|c|}{
\begin{tikzpicture}[scale=0.75]
\tikzmath{\s=3.33; \t=2.5; \d0=0.2; \d1=0.02; \d2=0.1; \p1=1.7; \p2=1.8;}
\coordinate (A) at (\d1,\d1);
\coordinate (B) at (\p2,\d1);
\coordinate (C) at (\d1,\p1);
\draw[-,red,thick,fill] (A)--(B)--(C)--(A);
\placegrid
\draw[-,thin] (\p2,-\d2) -- (\p2,\d2) node[at start,below]{$1-at$};
\draw[-,thin] (-\d2,\p1) -- (\d2,\p1) node[at start,left]{$1/a$};
\end{tikzpicture}
}
&
\begin{tikzpicture}[scale=0.75]
\tikzmath{\s=1.25; \t=2.5; \d0=0.2; \d1=0.02; \d2=0.1; \p1=0.714; \p2=1.9;}
\coordinate (A) at (\d1,\d1);
\coordinate (B) at (\s-\d1,\d1);
\coordinate (C) at (\s-\d1,\p1);
\coordinate (D) at (\d1,\p2);
\draw[-,red,thick,fill] (A)--(B)--(C)--(D)--(A);
\placegrid
\draw[-,thin] (\s-\d2,\p1) -- (\s+\d2,\p1) node[at end,right]{$(1-s)/a$};
\draw[-,thin] (-\d2,\p2) -- (\d2,\p2) node[at start,left]{$1/a$};
\end{tikzpicture}
\\
\hline

\multicolumn{2}{|c|}{
\rule{0pt}{10pt}
$(3) \quad 0 < 1-a \leq s \;\land\; 1-at \leq s$
}\\
\rule{0pt}{10pt}
$(3.1) \quad t/a \leq 1$ &
$(3.2) \quad t/a > 1$
\\[1ex]
\begin{tikzpicture}[scale=0.75]
\tikzmath{\s=4; \t=2; \d0=0.2; \d1=0.02; \d2=0.1; \p1=0.94; \p2=0.9; \p3=2.9; \p4=1;}
\coordinate (A) at (\d1,\p1);
\coordinate (B) at (\d1,\t-\d1);
\coordinate (C) at (\p4,\t-\d1);
\coordinate (D) at (\p3,\d1);
\coordinate (E) at (\p2,\d1);
\draw[-,red,thick,fill] (A)--(B)--(C)--(D)--(E)--(A);
\placegrid
\draw[-,thin] (-\d2,\p1) -- (\d2,\p1) node[at start,left]{$t/a$};
\draw[-,thin] (\p2,-\d2) -- (\p2,\d2) node[at start,below]{$t-at$};
\draw[-,thin] (\p3,-\d2) -- (\p3,\d2) node[at start,below]{$1-at$};
\draw[-,thin] (\p4,\t-\d2) -- (\p4,\t+\d2) node[at end,above]{$1-a$};
\end{tikzpicture}
&
\begin{tikzpicture}[scale=0.75]
\tikzmath{\s=5; \t=2; \d0=0.2; \d1=0.02; \d2=0.1; \p1=2.3; \p2=4.3; \p3=0.5; \p4=2.5;}
\coordinate (A) at (\p1,\d1);
\coordinate (B) at (\p3,\t-\d1);
\coordinate (C) at (\p4,\t-\d1);
\coordinate (D) at (\p2,\d1);
\draw[-,red,thick,fill] (A)--(B)--(C)--(D)--(A);
\placegrid
\draw[-,thin] (\p1,-\d2) -- (\p1,\d2) node[at start,below]{$t-at$};
\draw[-,thin] (\p2,-\d2) -- (\p2,\d2) node[at start,below]{$1-at\quad$};
\draw[-,thin] (\p3,\t-\d2) -- (\p3,\t+\d2) node[at end,above]{$t-a$};
\draw[-,thin] (\p4,\t-\d2) -- (\p4,\t+\d2) node[at end,above]{$1-a$};
\end{tikzpicture}
\\
\hline

\multicolumn{2}{|c|}{
\rule{0pt}{10pt}
$(4) \quad 0 < 1-a \leq s \;\land\; (1-s)/a > t$
}\\[1ex]
$(4.1) \quad t/a \leq 1$ &
$(4.2) \quad t/a > 1$
\\[1ex]
\begin{tikzpicture}[scale=0.75]
\tikzmath{\s=2.67; \t=2; \d0=0.2; \d1=0.02; \d2=0.1; \p1=0.52; \p2=1.26; \p3=1.33; \p4=1.13;}
\coordinate (A) at (\d1,\p2);
\coordinate (B) at (\d1,\t-\d1);
\coordinate (C) at (\p3,\t-\d1);
\coordinate (D) at (\s-\d1,\p1);
\coordinate (E) at (\s-\d1,\d1);
\coordinate (F) at (\p4,\d1);
\draw[-,red,thick,fill] (A)--(B)--(C)--(D)--(E)--(F)--(A);
\placegrid
\draw[-,thin] (-\d2,\p2) -- (\d2,\p2) node[at start,left]{$t/a$};
\draw[-,thin] (\s-\d2,\p1) -- (\s+\d2,\p1) node[at end,right]{$(1-s)/a$};
\draw[-,thin] (\p4,+\d2) -- (\p4,-\d2) node[at end,below]{$t-at$};
\draw[-,thin] (\p3,\t-\d2) -- (\p3,\t+\d2) node[at end,above]{$1-a$};
\end{tikzpicture}
&
\begin{tikzpicture}[scale=0.75]
\tikzmath{\s=3.33; \t=2; \d0=0.2; \d1=0.02; \d2=0.1; \p1=0.5; \p2=1.02; \p3=2.5; \p4=2.2;}
\coordinate (A) at (\p1,\t-\d1);
\coordinate (B) at (\p3,\t-\d1);
\coordinate (C) at (\s-\d1,\p2);
\coordinate (D) at (\s-\d1,\d1);
\coordinate (E) at (\p4,\d1);
\draw[-,red,thick,fill] (A)--(B)--(C)--(D)--(E)--(A);
\placegrid
\draw[-,thin] (\s-\d2,\p2) -- (\s+\d2,\p2) node[at end,right]{$(1-s)/a$};
\draw[-,thin] (\p4,+\d2) -- (\p4,-\d2) node[at end,below]{$t-at$};
\draw[-,thin] (\p1,\t-\d2) -- (\p1,\t+\d2) node[at end,above]{$t-a$};
\draw[-,thin] (\p3,\t-\d2) -- (\p3,\t+\d2) node[at end,above]{$1-a$};
\end{tikzpicture}
\\
\hline

\multicolumn{2}{|c|}{
\rule{0pt}{10pt}
$(5) \quad 1-a > s$
}\\
\rule{0pt}{10pt}
$(5.1) \quad t/a \leq 1 \;\land\; t-at\leq s$ &
$(5.2) \quad t/a \leq 1 \;\land\; (t-s)/a > t$
\\[1ex]

\begin{tikzpicture}[scale=0.75]
\tikzmath{\s=1.875; \t=2.5; \d0=0.2; \d1=0.02; \d2=0.1; \p1=1.5; \p2=1.764;}
\coordinate (A) at (\d1,\p2);
\coordinate (B) at (\d1,\t-\d1);
\coordinate (C) at (\s-\d1,\t-\d1);
\coordinate (D) at (\s-\d1,\d1);
\coordinate (E) at (\p1,\d1);
\draw[-,red,thick,fill] (A)--(B)--(C)--(D)--(E)--(A);
\placegrid
\draw[-,thin] (-\d2,\p2) -- (\d2,\p2) node[at start,left]{$t/a$};
\draw[-,thin] (\p1,+\d2) -- (\p1,-\d2) node[at end,below]{$t-at\qquad$};
\end{tikzpicture}
&
\begin{tikzpicture}[scale=0.75]
\tikzmath{\s=1.5; \t=2.5; \d0=0.2; \d1=0.02; \d2=0.1; \p1=0.24; \p2=2.05;}
\coordinate (A) at (\d1,\p2);
\coordinate (B) at (\d1,\t-\d1);
\coordinate (C) at (\s-\d1,\t-\d1);
\coordinate (D) at (\s-\d1,\p1);
\draw[-,red,thick,fill] (A)--(B)--(C)--(D)--(A);
\placegrid
\draw[-,thin] (-\d2,\p2) -- (\d2,\p2) node[at start,left]{\rule{0pt}{12pt}$t/a$};
\draw[-,thin] (\s-\d2,\p1) -- (\s+\d2,\p1) node[at end,right]{$(t-s)/a$};
\end{tikzpicture}
\\[1ex]
$(5.3) \quad t/a > 1 \;\land\; t-at \leq s$ &
$(5.4) \quad t/a > 1 \;\land\; t < (t-s)/a < 1$
\\[1ex]
\begin{tikzpicture}[scale=0.75]
\tikzmath{\s=2.67; \t=2; \d0=0.2; \d1=0.02; \d2=0.1; \p1=0.67; \p2=2.27;}
\coordinate (A) at (\p1,\t-\d1);
\coordinate (B) at (\s-\d1,\t-\d1);
\coordinate (C) at (\s-\d1,\d1);
\coordinate (D) at (\p2,\d1);
\draw[-,red,thick,fill] (A)--(B)--(C)--(D)--(A);
\placegrid
\draw[-,thin] (\p1,\t-\d2) -- (\p1,\t+\d2) node[at end, above]{$t-a$};
\draw[-,thin] (\p2,+\d2) -- (\p2,-\d2) node[at end,below]{$t-at\qquad$};
\end{tikzpicture}
&
\begin{tikzpicture}[scale=0.75]
\tikzmath{\s=2.67; \t=2; \d0=0.2; \d1=0.02; \d2=0.1; \p1=0.54; \p2=1.6;}
\coordinate (A) at (\p2,\t-\d1);
\coordinate (B) at (\s-\d1,\t-\d1);
\coordinate (C) at (\s-\d1,\p1);
\draw[-,red,thick,fill] (A)--(B)--(C)--(A);
\placegrid
\draw[-,thin] (\p2,\t-\d2) -- (\p2,\t+\d2) node[at end, above]{$t-a$};
\draw[-,thin] (\s-\d2,\p1) -- (\s+\d2,\p1) node[at end,right]{$(t-s)/a$};
\end{tikzpicture}
\\
\hline

\end{tabular}
\caption{Case distinctions for polygon 133, showing all possibilities of
  admissible regions in the top left corner (depending on conditions for
  $a,s,t$). The empty cases (not shown) correspond to the conditions
  $1/a \leq t$ or $t-a \geq s$.}
\label{fig:cases133}
\end{figure}

We have seen that the different domains of definition for $A(s,t,a)$ are
polyhedra in~$\R^3$ (some of which are unbounded). In
Figures~\ref{fig:pwregions1} and~\ref{fig:pwregions2} two $2$-dimensional
slices of the set~\eqref{eq:prism} for particular choices of~$a$ are
shown. Note that in \fig{pwregions1} condition $C_5$ is not shown since it was
eliminated in the process of merging regions on which $A$ is defined by
the same expression. Moreover, $C_9\equiv a\leq1$ is not visible since its
plane $a=1$ is parallel to the depicted cross section $a=1.4$.

\begin{figure}
  \begin{center}
    \includegraphics[width=0.74\textwidth]{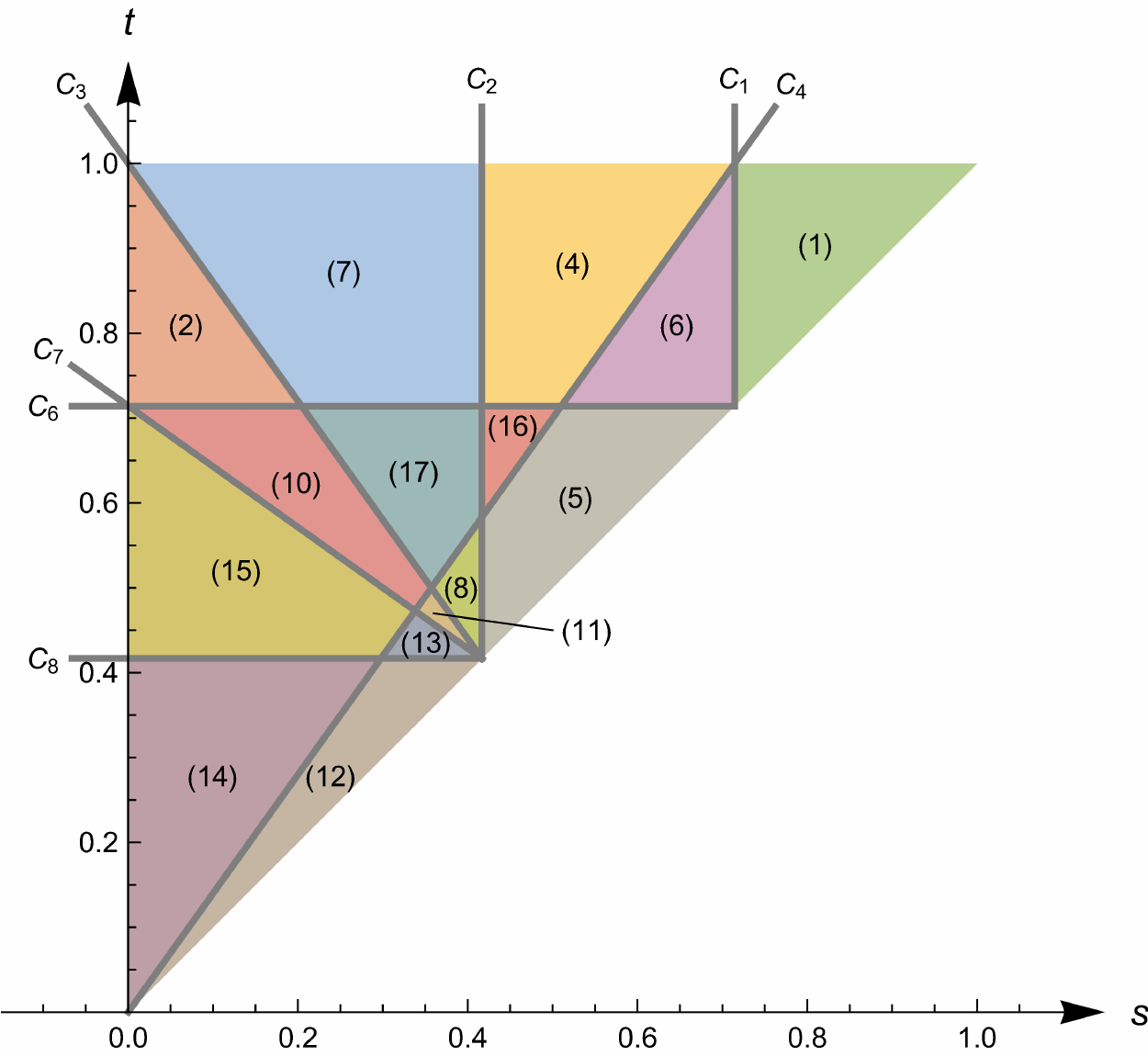}
  \end{center}
  \caption{Domains of definition of $A(s,t,a)$ for $a=1.4$, according to
    \lem{MGSTarea}.  Note that not all $17$ cases listed in the lemma are
    present for this particular choice of~$a$.}
  \label{fig:pwregions1}
  \vspace*{\floatsep}
  \begin{center}
    \includegraphics[width=0.74\textwidth]{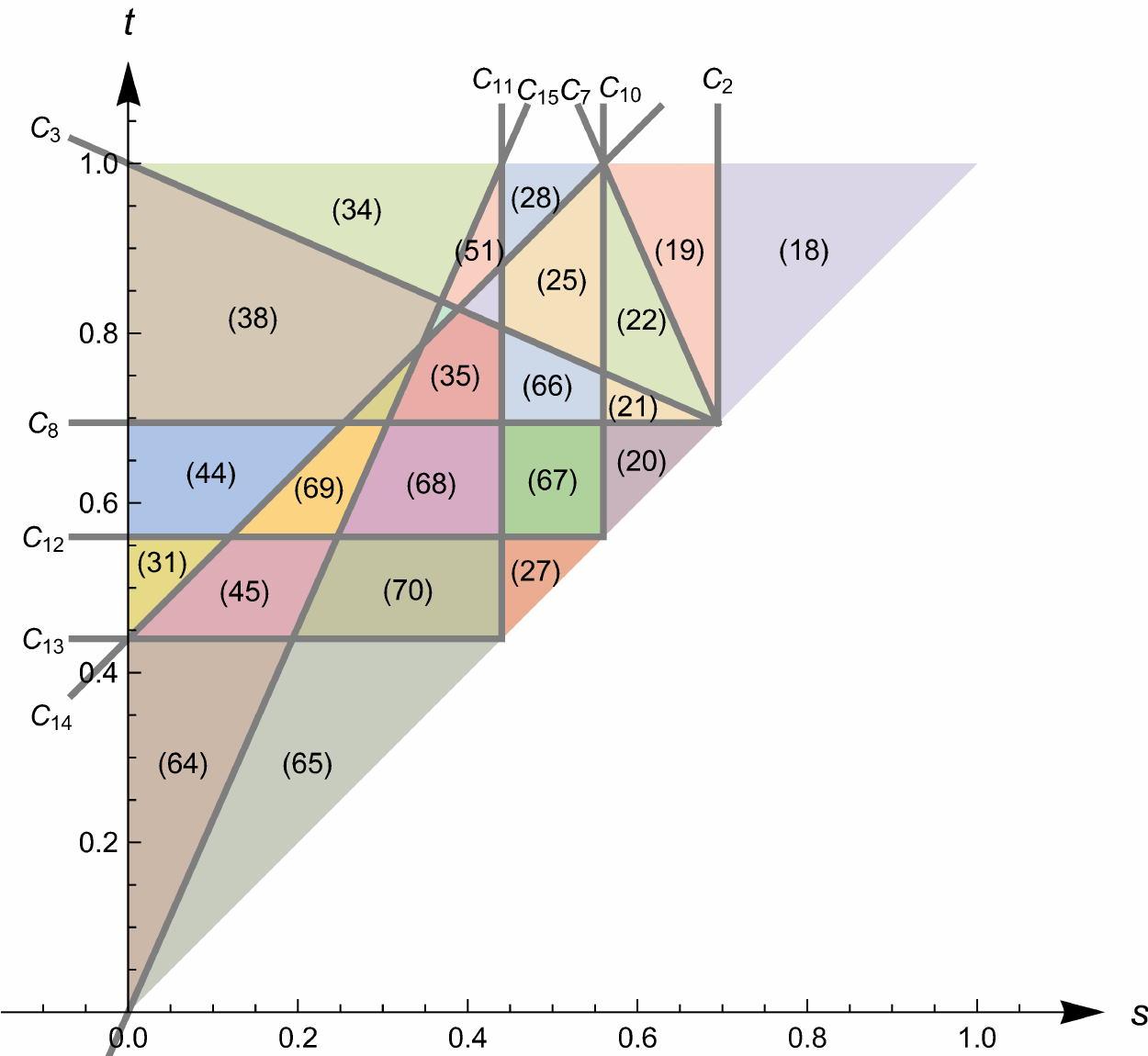}
  \end{center}
  \caption{Domains of definition of the area function $A(s,t,a)$ for $a=0.44$.}
  \label{fig:pwregions2}
\end{figure}

\pagebreak

\begin{lemma}\label{lem:min}
  For $a>0$, the minimum of the function $A(s,t,a)$ (defined in
  \lem{MGSTarea}) on the triangle $0\leq s\leq t\leq1$,
  \[
    m(a) := \min_{0\leq s\leq t\leq1} A(s,t,a)
  \]
  is given by a piecewise rational function, depending on~$a$, according to
  the following case distinctions (where we also give the location $(s_0,t_0)$
  of the minimum):
  \[
  \begin{array}{l@{\quad}c@{\quad}c@{\quad}c}
    & s_0 & t_0 & m(a) \\ \hline\rule{0pt}{16pt}
    0\leq a\leq \alpha_1 & \frac{(a-4) a}{a^3-a-4} & \frac{-2 a^2+4 a+2}{-a^3+a+4} & \frac{-a^4+2 a^3-2 a^2+6 a-4}{2 \left(a^3-a-4\right)} \\[1ex]
    \alpha_1\leq a\leq \alpha_2 & \frac{a \left(a^2-3\right)}{a^4-8 a-1} & \frac{a^3+a^2-5 a-1}{a^4-8 a-1} & \frac{a^3-2 a^2+a-2}{2 \left(a^4-8 a-1\right)} \\[1ex]
    \alpha_2\leq a\leq \alpha_3 & \frac{-2 a^3+2 a+1}{-a^4+8 a+3} & \frac{2 a^3+a^2-6 a-2}{a^4-8 a-3} & \frac{a^6+a^4-12 a^3+4 a^2-1}{2 a \left(a^4-8 a-3\right)} \\[1ex]
    \alpha_3\leq a\leq \alpha_4 & \frac{-2 a^2+a+1}{-4 a^3+5 a^2+6 a+1} & \frac{-2 a^3+a^2+4 a+1}{-4 a^3+5 a^2+6 a+1} & \frac{4 a^4-9 a^3+2 a^2+a-2}{2 \left(4 a^3-5 a^2-6 a-1\right)} \\[1ex]
    \alpha_4\leq a\leq \alpha_5 & \frac{a^3+a+1}{-4 a^3+3 a^2+6 a+1} & \frac{2 a^2+4 a+1}{-4 a^3+3 a^2+6 a+1} & \frac{4 a^4-4 a^3+a-2}{2 \left(4 a^3-3 a^2-6 a-1\right)} \\[1ex]
    \alpha_5\leq a\leq \alpha_6 & -\frac{3 a^2+a-1}{4 a^3-4 a^2-4 a+1} & \frac{-4 a^2-2 a+1}{4 a^3-4 a^2-4 a+1} & \frac{8 a^3-4 a^2-5 a+2}{2 \left(4 a^3-4 a^2-4 a+1\right)} \\[1ex]
    \alpha_6\leq a\leq \alpha_7 & \frac{2 a+1}{7 a+1} & \frac{8 a^2+6 a+1}{7 a^2+8 a+1} & \frac{-2 a^2+3 a+2}{2 (a+1) (7 a+1)} \\[1ex]
    \alpha_7\leq a\leq 1 & \frac{(a+1)^2}{a (7 a+4)} & \frac{(a+1) (4 a+1)}{a (7 a+4)} & \frac{-7 a^4+6 a^3+6 a^2-2 a-1}{2 a^2 (7 a+4)} \\[1ex]
    1\leq a\leq \alpha_8 & \frac{(a+1)^2}{a^4+2 a^3+3 a^2+2 a+3} & \frac{(a+1) \left(a^2+2 a+2\right)}{a^4+2 a^3+3 a^2+2 a+3} & \frac{a^4-a^2-2 a+4}{2 a \left(a^4+2 a^3+3 a^2+2 a+3\right)} \\[1ex]
    \alpha_8\leq a  & \frac{a+1}{a^2+2 a+3} & \frac{a^2+2 a+2}{a^2+2 a+3} & \frac{1}{2 a \left(a^2+2 a+3\right)}
  \end{array}
  \]
  Here, the quantities $\alpha_1,\dots,\alpha_8$ stand for the following
  algebraic numbers, where $\operatorname{Root}(p,I)$ denotes the unique
  real root of the polynomial~$p$ in the interval~$I$:
  \begin{alignat*}{2}
    \alpha_1 &= 0.295597... &&=\operatorname{Root}\bigl(a^3+a^2+3 a-1,[0,1]\bigr), \\
    \alpha_2 &= 0.395065... &&=\operatorname{Root}\bigl(a^5-9 a^2+a+1,[0,1]\bigr), \\
    \alpha_3 &= 0.405669... &&=\operatorname{Root}\bigl(2 a^4-a^3-6 a^2+1,[0,1]\bigr), \\
    \alpha_4 &= 0.553409... &&=\operatorname{Root}\bigl(12 a^4-15 a^3-24 a^2+5 a+6,[0,1]\bigr), \\
    \alpha_5 &= 0.622179... &&=\operatorname{Root}\bigl(4 a^3-8 a^2-3 a+4,[0,1]\bigr), \\
    \alpha_6 &= 0.647363... &&=\operatorname{Root}\bigl(8 a^2+a-4,[0,1]\bigr) = \tfrac{1}{16}\bigl(\sqrt{129}-1\bigr), \\
    \alpha_7 &= 0.931478... &&=\operatorname{Root}\bigl(7 a^3-5 a-1,[0,1]\bigr), \\
    \alpha_8 &= 1.174559... &&=\operatorname{Root}\bigl(a^3+a^2-3,[1,2]\bigr).
  \end{alignat*}
\end{lemma}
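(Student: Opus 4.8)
The plan is to minimize $A(s,t,a)$ over the closed triangle $T = \{0 \le s \le t \le 1\}$ for each fixed $a > 0$, exploiting the fact (from \lem{MGSTarea}) that $A(s,t,a)$ is a piecewise function whose pieces are of the form $\frac{1}{a}p(s,t,a)$ with $p$ a polynomial of degree at most $2$ in $s$ and $t$. On each polyhedral region $(R_i)$, finding the minimum is a quadratic programming problem: the minimum is attained either at an interior critical point (where $\nabla_{(s,t)} A = 0$), or on the relative boundary of the region, i.e.\ on one of the hyperplanes $C_j = 0$ or on the boundary of $T$. For an interior critical point of the piece $\frac{1}{a}p$, one solves the linear system $\partial_s p = \partial_t p = 0$, which yields the rational functions of $a$ listed as $(s_0, t_0)$ in the statement; substituting back gives the candidate value $m(a)$, again a rational function of $a$. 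First I would carry out this critical-point computation for each of the relevant pieces, then also enumerate the boundary candidates (restrictions of $A$ to the edges $s = 0$, $t = 1$, $s = t$, and to the separating hyperplanes between adjacent pieces), obtaining a finite list of candidate functions of $a$.

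Next I would determine, as $a$ ranges over $(0,\infty)$, which candidate is the pointwise minimum, and check feasibility — namely that the critical point $(s_0,t_0)$ actually lies inside the region $(R_i)$ on which that piece of $A$ is valid, and inside $T$. Both the feasibility conditions (the inequalities $C_j$ and $0 \le s_0 \le t_0 \le 1$, now with $s_0,t_0$ substituted as rational functions of $a$) and the comparisons between competing candidates ($m_i(a) \le m_j(a)$) reduce to sign conditions on univariate polynomials in $a$. The breakpoints $\alpha_1, \dots, \alpha_8$ arise precisely as the roots of these polynomials: $\alpha_6 = \frac{1}{16}(\sqrt{129}-1)$ is the root of the quadratic $8a^2+a-4$, and the others are $\operatorname{Root}$ expressions for the indicated cubics, quartics, and one quintic. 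This is a textbook application of \emph{cylindrical algebraic decomposition}: feeding the finitely many polynomial inequalities into Mathematica's \texttt{CylindricalDecomposition} (or \texttt{Reduce}) produces exactly the partition of $a$-space into the ten intervals shown, together with the certificate that on each interval the claimed $(s_0,t_0,m(a))$ is feasible and minimal. For the unbounded pieces (relevant for large $a$, the last two rows) one additionally checks that $A \to +\infty$ as $(s,t)$ leaves every bounded candidate region, so that the infimum is genuinely attained; in fact for $a \ge \alpha_8$ the minimizer is the classical $\bigl(\tfrac{a+1}{a^2+2a+3}, \tfrac{a^2+2a+2}{a^2+2a+3}\bigr)$ recovering the known asymptotic prefactor $\frac{1}{2a(a^2+2a+3)}$, and substituting $a=1$ into the row $\alpha_7 \le a \le 1$ recovers the value $\frac{1}{22}$ and the point $\bigl(\tfrac{4}{11},\tfrac{10}{11}\bigr)$ consistent with \sec{schur}, providing a useful sanity check.

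The main obstacle is the sheer combinatorial size of the case analysis: with $70$ pieces of $A$, each contributing an interior candidate plus several boundary candidates, one has on the order of a few hundred candidate functions of $a$, and deciding which dominates where — while simultaneously enforcing feasibility — is beyond hand computation. The resolution is to let CAD do the bookkeeping: one phrases the entire claim as a single first-order sentence, roughly ``for all $a > 0$ and all $(s,t) \in T$, $A(s,t,a) \ge M(a)$, and there exists $(s,t) \in T$ with $A(s,t,a) = M(a)$,'' where $M(a)$ is the proposed piecewise function, and verifies it returns \texttt{True}. Since all data are polynomial (the $\operatorname{Root}$ objects $\alpha_i$ being handled symbolically by the algebraic-number arithmetic built into the decomposition), this is decidable and the computation terminates; the full record of the pieces, the candidate list, and the CAD calls is deferred to the accompanying electronic material~\cite{KoutschanWong19}. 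A secondary subtlety is that the floor function $z = x + \lfloor ay \rfloor$ in the original discrete problem has already been absorbed into the area formulation — in the continuum limit the floor contributes nothing — so no separate error-term analysis is needed here; that is postponed to the translation back to integer $n,s,t$ in \sec{discrete}.
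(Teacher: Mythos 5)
Your proposal follows essentially the same route as the paper: interior critical points of each of the $70$ quadratic pieces, one-dimensional minimization of $A$ restricted to the boundary lines $C_j=0$ (and the edges of the triangle), and CAD to decide feasibility of each candidate $(s_0,t_0)$ and to compare the resulting candidate functions of $a$, which is exactly where the breakpoints $\alpha_1,\dots,\alpha_8$ arise. The only detail worth making explicit is that the paper treats the vertices of the regions---the $54$ intersection points of pairs of boundary lines, giving $348$ cases---as a separate candidate class rather than leaving them implicit in the edge restrictions; otherwise the two arguments coincide.
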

\begin{proof}
  We locate the minimum in a similar fashion as in \sec{schur}, by identifying
  points $(s,t)$ where the gradient of the area function~$A$ vanishes. What
  complicates our task is the additional parameter~$a$.  Since $A$ is defined
  in pieces, it may not be differentiable at the boundaries between different
  regions, and therefore, we should be aware that such locations could contain
  the minimum. For each region~$(R_i)$, $1\leq i\leq70$, on which $A(s,t,a)$
  is defined, we perform the following steps:
  
  \begin{itemize}
    \setlength{\itemsep}{1ex}
  \item compute the gradient $\left(\frac{\partial A}{\partial s},
    \frac{\partial A}{\partial t}\right)$,
  \item find all points $(s,t)$ where the gradient is zero, and
  \item for each such point determine for which values of~$a$ it
    actually lies in $(R_i)$.
  \end{itemize} 

  On the region $(R_1)$ from \lem{MGSTarea}, the gradient
  of~$A$ is $\frac1a(s-t+1,t-s-1)$, which vanishes on all points $(s,s+1)$;
  however, since the region $(R_1)$ is characterized by $\overline{C_1}\equiv s>\frac1a$
  (and the general condition $s\leq t\leq1$), one sees that none of these
  points lie in it. Continuing in this manner, we find that in each of the
  regions~$(R_2)-(R_{70})$ there is exactly one point $(s,t)$ for which the
  gradient of~$A$ vanishes, but in most cases this point lies outside the
  region for all~$a$.  For example, on $(R_2)$ the gradient is
  $\frac1a(2as-2at+2s-t+a,t-2as-s)$, which equals zero for
  \begin{equation}\label{eq:locgradvan}
    (s,t) = \left(\frac{a}{4a^2+2a-1},\frac{a(2a+1)}{4a^2+2a-1}\right).
  \end{equation}
  In order to find the values of~$a$ that give us that $(s,t)\in (R_2)$, the
  conditions defining $(R_2)$ (plus the global assumptions) need to be satisfied,
  namely:
  \[
    a s+t\leq 1 \;\land\; t\geq a s \;\land\; at>1 \;\land\; 0<s<t<1.
  \]
  After substituting $s$ and $t$ with the right hand side of (\ref{eq:locgradvan}) and clearing
  denominators, one gets a system of polynomial inequalities, involving only
  the variable~$a$.  Cylindrical algebraic decomposition simplifies it to
  \[
    a \geq \operatorname{Root}\bigl(2a^3-3a^2-2a+1,[1,2]\bigr)
    = 1.889228559...
  \]
  Hence, for each $a$ satisfying this condition we have a local minimum
  at the point given in~\eqref{eq:locgradvan}.

  We proceed  in similar fashion and identify $17$ local minima, each
  occurring only for $a$ in a certain interval. Some of these intervals
  partly overlap, which means that we have to study a subdivision of the
  positive real line that is a refinement of all $17$ intervals.  When two
  functions intersect in the interior of an interval, it is split into
  two subintervals. CAD is once again employed to find the smallest among
  the local minima; this is done individually for each of the refined
  intervals. As a result, we obtain the piecewise description of the
  function~$m(a)$ given above; see \fig{minplot} and the accompanying
  electronic material~\cite{KoutschanWong19} for details. 
  
  \begin{figure}[t]
    \begin{center}
      \includegraphics[width=0.84\textwidth]{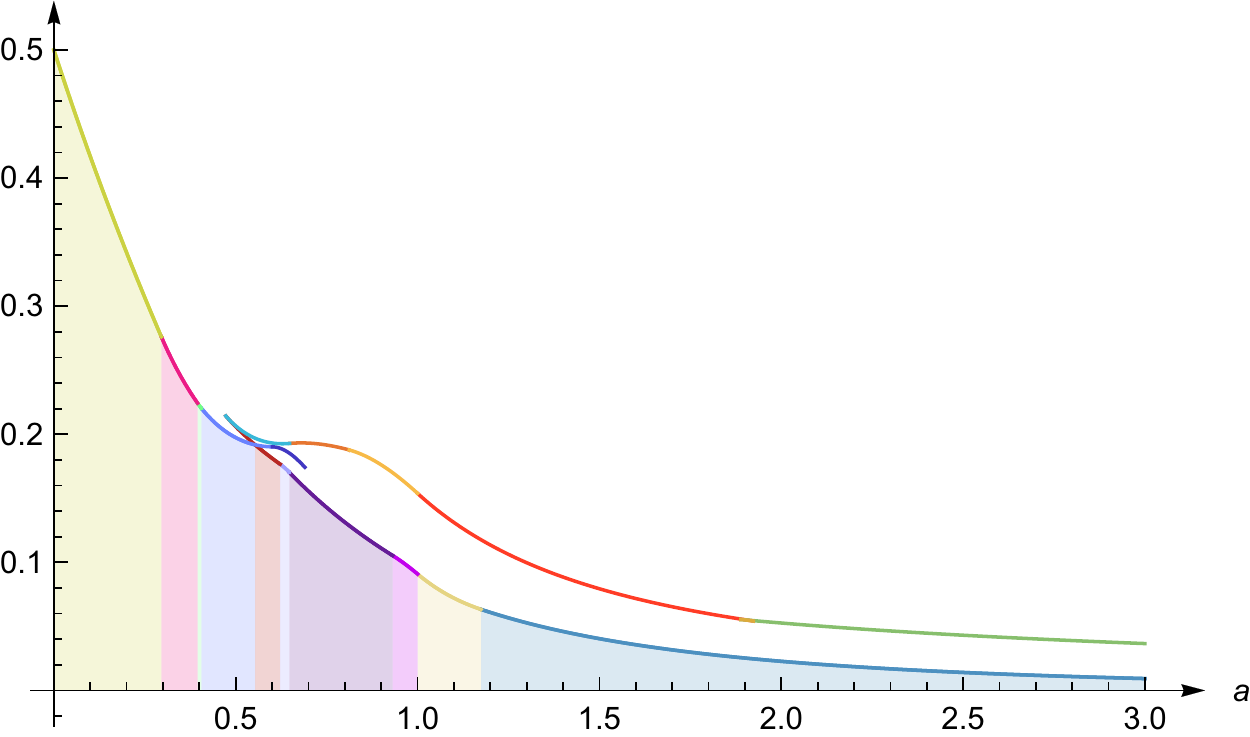}
    \end{center}
    \caption{Plot of $A(s,t,a)$ on the $17$ different intervals of~$a$
      identified from the $17$ local minima in the proof of \lem{min} for
      $0\leq a\leq3$; the shading under the graph indicates the main $10$
      intervals that are needed to describe the global minimum
      function~$m(a)$.}
    \label{fig:minplot}
  \end{figure}

  It is clear from construction that $A(s,t,a)$ must be a continuous function,
  since the admissible polygons (shaded regions in \fig{MGSTarea}) cannot jump
  or disappear if the parameters $a,s,t$ are changed infinitesimally, i.e., if
  the lines in \fig{MGSTarea} are shifted or slanted by a little bit.  In
  contrast, it is not obvious why it should be differentiable.
  Therefore, there is a possibility
  that the minimum can occur where the derivative does not exist. Hence, it
  is necessary to study the values of $A(s,t,a)$ along the boundaries of the
  different domains of definition. To accomplish this task, we view $A$ as a
  bivariate function in $s$ and~$t$, with a parameter~$a$. For each inequality in
  the list of conditions~\eqref{eq:pwconds}, the corresponding equation defines
  a line in~$\R^2$. For each such line, we proceed to determine the range
  of~$a$ for which the line intersects the triangle $0\leq s\leq t\leq1$. On
  the resulting line segment, the pieces of $A(s,t,a)$ are given by univariate
  polynomials, still involving the parameter~$a$. Equating their derivatives to
  zero, we find all of the local minima on this line segment, which could give rise to
  local minima of $A(s,t,a)$. After looking at all $16$ lines, each of which
  splits into at most $70$ segments, we find $225$ candidates for minima.
  CAD confirms that none of them are actually smaller than the one given
  by $m(a)$. This fact also becomes apparent by plotting these candidates
  against the function~$m(a)$, as shown in \fig{minloc12} (top part).

  Finally, we should also check all points where any two lines defined
  by~\eqref{eq:pwconds} intersect. We find $54$ points that lie inside the
  triangle $0\leq s\leq t\leq1$, at least for certain
  choices of~$a$. The value of $A(s,t,a)$ at a particular point is given by a
  piecewise function depending on~$a$.  Assembling all pieces for all points,
  we obtain $348$ cases. For each of them, CAD confirms (rigorously!) that the
  value of $A(s,t,a)$ does not go below~$m(a)$. A ``non-rigorous proof'' of this
  fact is shown in \fig{minloc12} (bottom part).

  Summarizing, we have shown that, for each particular choice of $a>0$, the
  minimum of the function $A(s,t,a)$ on the triangle $0\leq s\leq t\leq1$ is
  given by~$m(a)$, and we have determined the location $(s_0,t_0)$ where this
  minimum is attained. This immediately establishes an asymptotic lower bound
  for MGSTs on$[n]$, as $n$ goes to infinity.
\end{proof}

\begin{figure}[t]
  \begin{center}
    \includegraphics[width=0.77\textwidth]{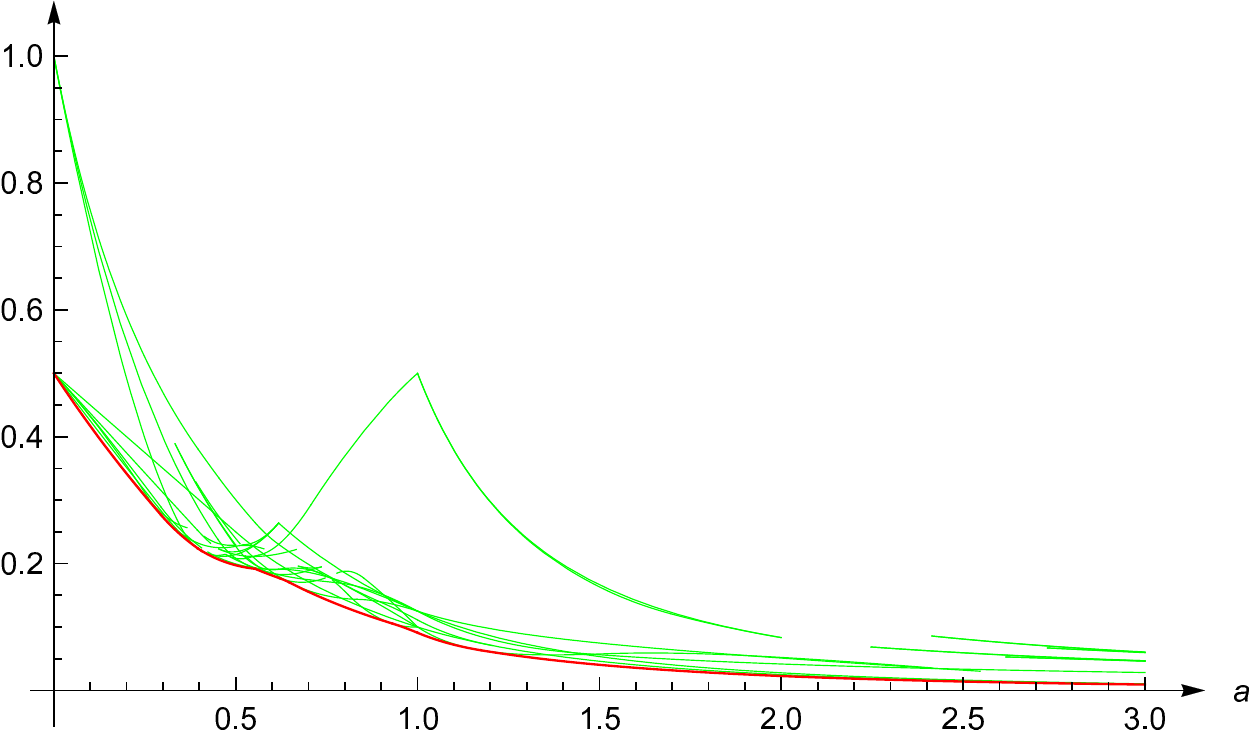} \\[2ex]
    \includegraphics[width=0.77\textwidth]{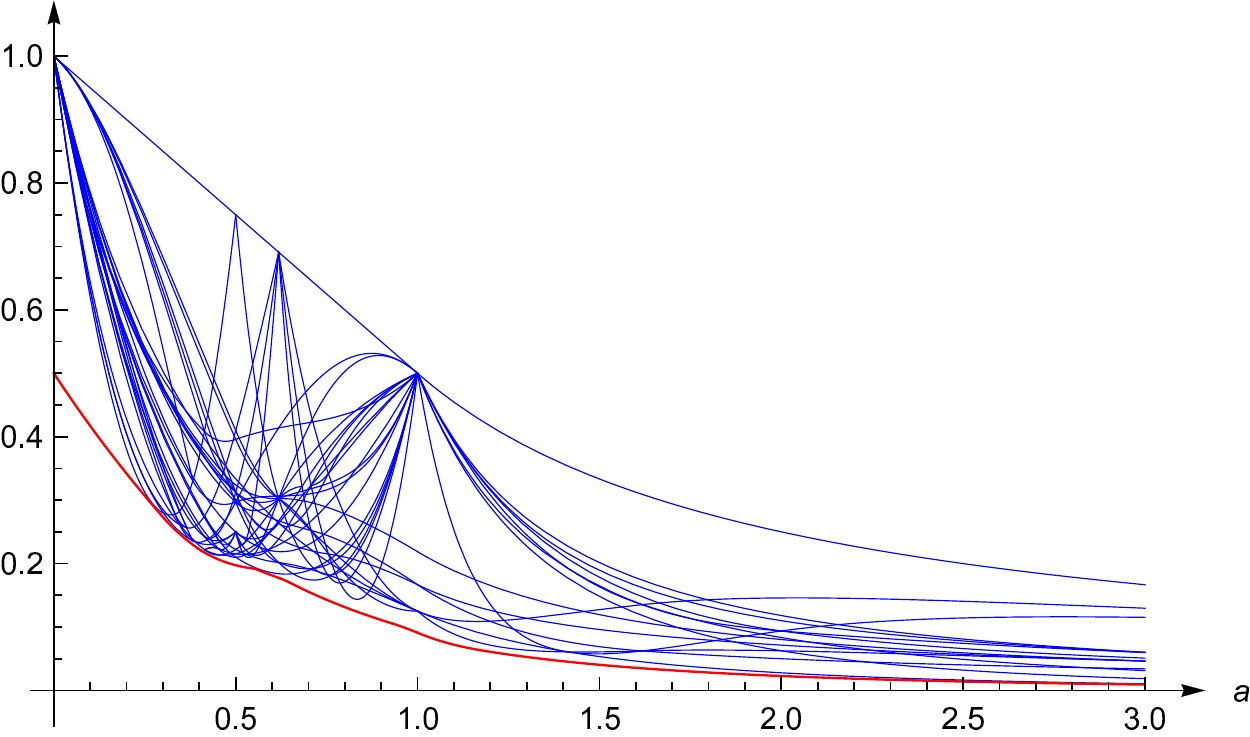}
  \end{center}
  \caption{Global minimum of $A(s,t,a)$ (red curve) compared to potential
    minima along lines (green curves, top part) and potential minima on intersection points
    (blue curves, bottom part).}
  \label{fig:minloc12}
\end{figure}

We wrap up this section with some remarks on the consquences of \lem{min} and
on what appears to be erratic (jumpy) behavior for some values of $a$ in
\fig{minloc12}. We assure the reader that it is not due to the amount of
alcohol that was consumed throughout this meal, but rather an indication of
the appearance and disappearance of certain admissible regions for the MGSTs
as $a$ changes.

First, we would like to note that \lem{min} explains why the asymptotic
formula for MGSTs for integral $a\geq2$ given
in~\cite{ButlerCostelloGraham10,Thanatipanonda09,ThanatipanondaWong17} does
not specialize to the previously known case $a=1$: this phenomenon is due to
the piecewise definition of~$m(a)$, with a transition at $1<\alpha_8<2$.
Geometrically speaking, $\alpha_8$ marks the point where the polygon 133
(see \fig{MGSTarea}) disappears, when $a$ increases from $1$ to $2$, and
$s=s_0(a)$ and $t=t_0(a)$ are updated constantly. 

A second interesting finding that follows from \lem{min} is that there is a
jump of $\bigl(s_0(a),t_0(a)\bigr)$ at $a=\alpha_4=0.5534...$; the function
$m(a)$ however is continuous. In \fig{minplot} one sees that at $a=\alpha_4$
the functions of two local minima intersect, and therefore this point marks
the jump from one branch to another one. In \fig{MGSTarea3} the situation is
shown for two different values of $a$ close to $\alpha_4$: while the shaded area
in both parts of the figure is almost the same, the values of $s$ and $t$
change quite dramatically. We invite the reader to play with such transitions
in the accompanying electronic material~\cite{KoutschanWong19}.

In the next section, we bring up the fact that the coloring pattern of three blocks that we
generously assumed for $a>0$ does not actually give the global minimum
on $0<a<1$ over any 2-coloring of $[n]$ and we take care to emphasize this in
the statement of the theorems. This will therefore explain the erratic
behavior at $a=1$ in both graphs of \fig{minloc12}.

\section{Exact bounds for generalized Schur triples}
\label{sec:discrete}

In this section we apply the results from the last section, i.e., from the
continuous setting, to the discrete enumeration problem of monochromatic
generalized Schur triples (MGSTs). Hence, $s$ and $t$ are now integers with
$1\leq s\leq t\leq n$ that describe the coloring $R^sB^{t-s}R^{n-t}$ of~$[n]$.
Throughout this section we use the convention that a sum whose lower bound is
greater than its upper bound is zero, i.e.,
\[
  \sum_{x=i}^j f(x) =
  \begin{cases}
    f(i) + \dots + f(j), & \text{if } i\leq j, \\
    0, & \text{if } i>j.
  \end{cases}
\]

Analogous to \sec{schur} we use the notation $\M^{(a)}$ to count MGSTs. More
precisely, we define $\M^{(a)}(n,s,t)$ and $\M^{(a)}(n)$, as follows:
\begin{align*}
  \M^{(a)}(n,s,t) &\colonequal \bigl|\bigl\{ T=(x,y,x+\lfloor ay\rfloor)\in[n]^3 : \\
  & \qquad\quad T \in ([s]\cup\{t+1,\dots,n\})^3 \lor T\in\{s+1,\dots,t\}^3 \bigr\}\bigr|, \\[1ex]
  \M^{(a)}(n) &\colonequal \min_{1\leq s\leq t\leq n} \M^{(a)}(n,s,t).
\end{align*}
In contrast to the previous section, we will now mostly look at special cases
for~$a$, since we cannot hope to get an exact formula for the minimal number
of MGSTs for general~$a\in\R^+$.

\begin{lemma}\label{lem:MGSTcount}
  Let $a\in\R$ with $a\geq1$ and let $n,s,t\in\N$ with $1\leq s\leq t\leq n$.
  Furthermore, assume that the inequalities $as+t\geq n$, $t\geq as$, and
  $s+as\leq t$\linebreak[4] hold. Then the number $\M^{(a)}(n,s,t)$ of monochromatic
  generalized Schur triples of $[n]$ under the coloring $R^sB^{t-s}R^{n-t}$
  is given by
  \[
  \sum_{y=1}^{\lfloor s/a\rfloor} \; \sum_{x=1}^{s-\lfloor a y\rfloor} \!\! 1 \;\; +
  \sum_{y=s+1}^{\lfloor (t-s)/a\rfloor} \; \sum_{x=s+1}^{t-\lfloor a y\rfloor} \!\! 1 \;\; +
  \sum_{y=1}^{\lfloor (n-t)/a\rfloor} \; \sum_{x=t+1}^{n-\lfloor a y\rfloor} \!\! 1 \;\; + \;
  \sum_{y=t+1}^{\lfloor n/a\rfloor} \; \sum_{x=1}^{n-\lfloor a y\rfloor } \!\! 1.
  \]
  Moreover, the explicit list of these MGSTs $(x,y,x+\lfloor ay\rfloor)$
  can be directly read off from the above formula.
\end{lemma}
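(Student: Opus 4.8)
The plan is to reuse the block-by-block case analysis from the proof of \lem{MGSTarea}, now carried out over the integer lattice, and to let the three hypotheses $as+t\geq n$, $t\geq as$, $s+as\leq t$ do the work of killing all but four of the cases. Write $I_1=[s]$, $I_2=\{s+1,\dots,t\}$, $I_3=\{t+1,\dots,n\}$ for the three colour blocks, with $I_1,I_3$ red and $I_2$ blue, so that a triple $(x,y,z)$ with $z=x+\lfloor ay\rfloor\in[n]$ is monochromatic precisely when it lies in $(I_1\cup I_3)^3$ or in $I_2^3$. I would classify such triples by their block pattern $ijk$, meaning $x\in I_i$, $y\in I_j$, $z\in I_k$. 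Exactly as in \lem{MGSTarea}, only the seven patterns $111$, $222$, $113$, $131$, $133$, $313$, $333$ need to be examined: in patterns $311$ and $331$ one would have $x\in I_3$ and $z\in I_1$, but $z=x+\lfloor ay\rfloor\geq x+1\geq t+2>s\geq z$ (using $\lfloor ay\rfloor\geq\lfloor a\rfloor\geq1$), a contradiction; and all remaining patterns mix the two colours.

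The first step is to show that, under the stated hypotheses, patterns $113$, $131$, and $333$ are empty. For $113$: if $x,y\leq s$ then $\lfloor ay\rfloor\leq ay\leq as\leq t-s$ by $s+as\leq t$, hence $z=x+\lfloor ay\rfloor\leq s+(t-s)=t$, so $z\notin I_3$. For $131$: if $y\geq t+1$ then $a(t+1)\geq t+1$ since $a\geq1$, so $\lfloor ay\rfloor\geq\lfloor a(t+1)\rfloor\geq t+1$ and $z\geq 1+(t+1)>t\geq s$, so $z\notin I_1$. For $333$: the inequality $(a-1)(t-s)+a\geq0$, which holds because $a\geq1$ and $t\geq s$, rearranges to $a(t+1)\geq as+(t-s)$, and together with $as\geq n-t$ this gives $a(t+1)\geq n-s$; hence $y\geq t+1$ forces $\lfloor ay\rfloor\geq n-s$, so $z\leq n$ yields $x=z-\lfloor ay\rfloor\leq s<t+1$ and thus $x\notin I_3$. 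So the monochromatic triples split into exactly the four patterns $111$, $222$, $313$, $133$.

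The second step is to identify each of these four patterns with one of the four double sums, by turning ``$x\in I_i$, $y\in I_j$, $z\in I_k$'' into sharp ranges for $x$ and $y$. For $111$, the existence of $x\geq1$ with $x+\lfloor ay\rfloor\leq s$ is equivalent to $ay<s$, which after noting that $y=\lfloor s/a\rfloor$ with $s/a\in\Z$ only contributes an empty inner sum is captured by $1\leq y\leq\lfloor s/a\rfloor$; then $1\leq x\leq s-\lfloor ay\rfloor$, and $y\leq\lfloor s/a\rfloor\leq s$ keeps $y\in I_1$ automatically; this is the first double sum, and pattern $222$ gives the second by the same reasoning, using $\lfloor(t-s)/a\rfloor\leq t-s\leq t$. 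For $313$, the constraints $x\geq t+1$ and $z\leq n$ force $\lfloor ay\rfloor\leq n-t-1$, hence $ay<n-t$ and $1\leq y\leq\lfloor(n-t)/a\rfloor$, while $as+t\geq n$ gives $\lfloor(n-t)/a\rfloor\leq s$ so that such $y$ lie in $I_1$; together with $t+1\leq x\leq n-\lfloor ay\rfloor$ this is the third double sum. For $133$, $z\leq n$ forces $ay<n$, that is $y\leq\lfloor n/a\rfloor$, while $y\geq t+1$ gives both $\lfloor ay\rfloor\geq t+1$ (so $z=x+\lfloor ay\rfloor\geq t+2$ lies in $I_3$ as soon as $z\leq n$) and, by the inequality from the previous paragraph, $n-\lfloor ay\rfloor\leq s$ (so $1\leq x\leq n-\lfloor ay\rfloor$ keeps $x\in I_1$); this is the fourth double sum. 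Adding the four contributions produces the asserted formula, and since every term of every double sum is a single $1$ indexed by a pair $(x,y)$ whose associated triple is $(x,y,x+\lfloor ay\rfloor)$, the explicit list of MGSTs is read off directly from the summation bounds, which gives the ``moreover'' claim.

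I expect the only genuine work — and the main place an error could creep in — to be the endpoint bookkeeping with the floor function in the second step: for each of the four sums one must verify that enlarging the $y$-range past the stated upper bound contributes only empty inner sums (so the bound is exactly right and nothing is over- or under-counted), and that the inner $x$-bounds never silently push $x$ or $z$ out of the intended block. Each of these checks reduces to $a\geq1$ together with the three elementary inequalities already isolated above, namely $s+as\leq t$, $(n-t)/a\leq s$ (equivalent to $as+t\geq n$), and $(a-1)(t-s)+a\geq0$; so no new idea is required, only care with integer parts. (The hypothesis $t\geq as$ is in fact implied by $s+as\leq t$ since $s\geq1$, and is listed only for emphasis.)
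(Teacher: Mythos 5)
Your proposal is correct and follows essentially the same route as the paper: decompose the monochromatic triples by block pattern, observe that under the hypotheses only $111$, $222$, $313$, $133$ survive, and match each to one of the four double sums (with the empty-sum convention absorbing the boundary cases such as $at>n$). You simply supply the verifications that the paper leaves implicit (emptiness of $113$, $131$, $333$ and the floor-function endpoint checks), and these are all correct.
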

\begin{proof}
  Under the given assumptions, we have to consider monochromatic triples of
  types $111$, $222$, $313$, and $133$, see, e.g., \fig{MGSTarea}. Obviously,
  the four sums correspond exactly to these four cases. Note that if $at>n$,
  then the case $133$ is not present, which is reflected by the fact that the
  corresponding sum is zero in this case.
\end{proof}

The assumed inequalities in \lem{MGSTcount} tell us that we are either in
$(R_7)$ (when $at>n$) or in $(R_{17})$ (when $at\leq n$); these regions were
introduced in \lem{MGSTarea}. Recall $\alpha_8=1.174559...$ from \lem{min},
and also that the global minimum of the area function $A(s,t,a)$ is located in
$(R_7)$ (when $a\geq\alpha_8$) or in $(R_{17})$ (when $1\leq a\leq\alpha_8$).

\pagebreak

\begin{theorem}\label{thm:MGSTcount2}
  The minimal number of monochromatic generalized Schur triples of the form
  $(x,y,x+2y)$ that can be attained under any $2$-coloring of $[n]$ of the
  form $R^sB^{t-s}R^{n-t}$ is
  \[
    \M^{(2)}(n) = \left\lfloor\frac{n^2 - 10 n + 33}{44}\right\rfloor.
  \]
\end{theorem}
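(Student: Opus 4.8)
The plan is to follow exactly the recipe established in \sec{schur} for the classical case $a=1$, now specialized to $a=2$. The starting point is \lem{min}: for $a=2$ we are in the range $\alpha_8\leq a$ (since $\alpha_8=1.174\ldots<2$), so the continuous minimum of the area function is attained at $(s_0,t_0) = \bigl(\frac{a+1}{a^2+2a+3},\frac{a^2+2a+2}{a^2+2a+3}\bigr) = \bigl(\frac{3}{11},\frac{10}{11}\bigr)$, with minimal area $m(2) = \frac{1}{2\cdot2\cdot11} = \frac{1}{44}$. This tells us that, for finite $n$, the optimal integer parameters should be $s_0 = \bigl\lfloor\frac{3n+c_1}{11}\bigr\rfloor$ and $t_0 = \bigl\lfloor\frac{10n+c_2}{11}\bigr\rfloor$ for suitable integer offsets $c_1,c_2$ to be pinned down.

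Next I would establish the discrete analogue of \lem{minMnst}: a closed form for $\M^{(2)}(n,s,t)$ under the three inequalities of \lem{MGSTcount} (with $a=2$: $2s+t\geq n$, $t\geq 2s$, $3s\leq t$), obtained by evaluating the four double sums in \lem{MGSTcount}. Because $a=2$ is an integer, the floors $\lfloor 2y\rfloor = 2y$ disappear and each inner sum is an arithmetic-progression sum, so $\M^{(2)}(n,s,t)$ becomes an explicit quadratic polynomial in $n,s,t$ (at least after resolving the floors $\lfloor s/2\rfloor$, etc., by a parity case split). Then, writing $n = 11k+\ell$ for $0\leq\ell\leq10$, substituting the candidate $s_0,t_0$, and setting $s = s_0+i$, $t = t_0+j$, the claim that $(i,j)=(0,0)$ minimizes reduces — just as in \lem{minMnst} — to showing a fixed quadratic $p_\ell(i,j)$ is nonnegative on $\Z^2$. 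Each such $p_\ell$ is a positive-definite (or positive-semidefinite) quadratic whose real zero set is a small ellipse containing no lattice points; I would verify nonnegativity on a bounded box of lattice points near the origin by hand and invoke \texttt{CylindricalDecomposition} on the formula ``$(i,j)$ outside the box $\lor\ p_\ell(i,j)\geq0$'' to cover the rest, exactly the device used in \lem{minMnst}. As in \lem{minMnst}, the fact that the constrained minimum is the global one over all $1\leq s\leq t\leq n$ follows from the general analysis underlying \lem{min}.

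With $\M^{(2)}(n,s_0,t_0)$ in hand for each residue class $\ell$, the final step mirrors \thm{minMn}: one tabulates the eleven values, observes that each is of the form $\frac{1}{44}(n^2-10n) + \delta_\ell$ with $\delta_\ell\in[\tfrac{?}{44},\tfrac{33}{44}]$ — more precisely one checks $-\tfrac{11}{44}\leq\delta_\ell \leq \tfrac{33}{44}$ so that $\frac{n^2-10n+33}{44}$ rounds down correctly in every case — and concludes $\M^{(2)}(n) = \bigl\lfloor\frac{n^2-10n+33}{44}\bigr\rfloor$. All the per-residue arithmetic is routine and can be delegated to the accompanying electronic material~\cite{KoutschanWong19}, with one or two representative cases spelled out in the text.

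The main obstacle I anticipate is not conceptual but bookkeeping: the constant $44 = 4\cdot 11$ in the denominator means the offsets in $s_0,t_0$ and the residue analysis are modulo $11$, yet the parity of $s$ (and of $t-s$, $n-t$) enters through the floors $\lfloor s/2\rfloor$ in \lem{MGSTcount}, so effectively one must handle residues modulo $22$ rather than $11$, roughly doubling the case count. Getting the correct integer offsets $c_1,c_2$ so that the rounding in the final floor formula works out uniformly — and confirming that the three structural inequalities of \lem{MGSTcount} actually hold at the chosen $(s_0,t_0)$ for all large $n$ (and checking small $n$ separately) — is the delicate part; everything else is a direct transcription of the $a=1$ argument.
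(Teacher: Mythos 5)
Your proposal follows essentially the same route as the paper's proof: invoke \lem{min} to place $(s_0,t_0)$ near $\bigl(\frac{3n}{11},\frac{10n}{11}\bigr)$, evaluate the sums of \lem{MGSTcount} into an explicit quadratic with floors, split into residues modulo $22$ together with the parities of the perturbations $i,j$ (the paper's $88$ polynomials), certify minimality at $(s_0,t_0)$ by CAD, tabulate the $\delta_\ell$ and check they fit in an interval of length one, and verify the structural inequalities for large $n$ with small $n$ handled directly. The only detail you leave open — the exact offsets in $s_0,t_0$, which in the paper include an exceptional correction $t_0\mapsto t_0-1$ for $n=22k+10$ — is exactly the bookkeeping you flag as delicate, so the argument is sound and matches the paper's.
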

\begin{proof}
  For $a=2$ we clearly have $\alpha_8\leq a$, and by \lem{min} it follows
  that the optimal choice for $s$ and~$t$ is expected around the point
  \[
    n\cdot\left(\frac{a+1}{a^2+2 a+3}, \frac{a^2+2 a+2}{a^2+2 a+3}\right) =
    \left(\frac{3n}{11},\frac{10n}{11}\right).
  \]
  The three conditions $2s+t\geq n$, $t\geq 2s$, $3s\leq t$ are satisfied (at
  least for large~$n$), and therefore we can use \lem{MGSTcount} to compute
  the exact number of MGSTs:
  \begin{multline*}
    \M^{(2)}(n,s,t) =
    \sum_{y=1}^{\lfloor s/2\rfloor} \sum_{x=1}^{s-2y} 1 \;\; + \!\!
    \sum_{y=s+1}^{\lfloor (t-s-1)/2\rfloor} \sum_{x=s+1}^{t-2y} \! 1 \;\; + \!\!
    \sum_{y=1}^{\lfloor (n-t)/2 \rfloor} \sum_{x=t+1}^{n-2 y} \! 1 = {} \\ =
    \biggl\lfloor \frac{s}{2}\biggr\rfloor\hspace{-0.471pt}\left\lfloor \frac{s-1}{2}\right\rfloor +
    \left\lfloor \frac{n-t}{2}\right\rfloor\hspace{-0.471pt} \hspace{-0.471pt}\left\lfloor \frac{n-t-1}{2}\right\rfloor +
    \left\lfloor \frac{t-s}{2}\right\rfloor \left\lfloor \frac{t-s-1}{2}\right\rfloor +
    2 s^2-s t+s.
  \end{multline*}
  From now on, we proceed in an analogous fashion as in the proofs of
  \lem{minMnst} and \thm{minMn}.  Empirically, we find that for each $n\in\N$,
  the minimum of $\M^{(2)}(n,s,t)$ is attained at
  \[
    s_0 = \left\lfloor\frac{3n+1}{11}\right\rfloor, \qquad
    t_0 = \left\lfloor\frac{10n}{11}\right\rfloor +
    \begin{cases} -1, & \text{if } n=22k+10, \\ 0, & \text{otherwise}.\end{cases}
  \]
  When we plug in $s_0+i$ and $t_0+j$ into the above formula for
  $\M^{(2)}(n,s,t)$, we need to make a case distinction $n=22k+\ell$ for
  $0\leq\ell\leq21$ in order to get rid of the floors. Moreover, we need to
  distinguish even and odd $i$ (resp.~$j$). Evaluating and simplifying
  \[
    \M^{(2)}(22k+\ell, s_0+2i_1+i_2, t_0+2j_1+j_2), \quad 0\leq\ell\leq21, \; i_2,j_2\in\{0,1\},
  \]
  we obtain $88$ polynomials in $i_1,j_1,k$. Applying CAD individually to each of
  these polynomials and by checking a few values explicitly (not unlike what we
  did in the proof of \lem{MGSTcount}), one proves that the minimum is indeed
  attained at $(s_0,t_0)$. Finally, one evaluates $\M^{(2)}(22k+\ell,s_0,t_0)$ for
  all $\ell=0,\dots,21$ and finds that it is always of the form
  $\frac{1}{44}\bigl(n^2-10n\bigr)+\delta_{\ell}$, where the values $\delta_0,\dots,\delta_{21}$ are
  \[
    0,\tfrac{9}{44},\tfrac{4}{11},\tfrac{21}{44},\tfrac{6}{11},\tfrac{25}{44},\tfrac{6}{11},\tfrac{21}{44},
    \tfrac{4}{11},\tfrac{9}{44},0,\tfrac{3}{4},\tfrac{5}{11},\tfrac{5}{44},\tfrac{8}{11},\tfrac{13}{44},
    -\tfrac{2}{11},\tfrac{13}{44},\tfrac{8}{11},\tfrac{5}{44},\tfrac{5}{11},\tfrac{3}{4}.
  \]
  Since the largest value is $\frac34$ and since the smallest value is greater
  than $-\frac14$ (i.e., all values $\delta_{\ell}$ lie inside an interval of
  length~$1$), the claimed formula follows.

  One last detail: we still have to examine for which~$n$ the conditions
  $2s+t\geq n$, $t\geq 2s$, $3s\leq t$ are satisfied, as it could happen that
  for small~$n$ the point $(s_0,t_0)$ lies not inside the correct region
  $(R_{17})$, due to the rounding errors. With the (somewhat generous)
  assumptions $\frac{3n+1}{11}-1\leq s\leq\frac{3n+1}{11}$ and
  $\frac{10n}{11}-2\leq t\leq\frac{10n}{11}$ we find that the above conditions
  are satisfied for all $n\geq25$. For the remaining values $n<25$, the
  claimed formula can be verified by an explicit computation.
\end{proof}

\begin{theorem}\label{thm:MGSTcount3}
  The minimal number of monochromatic generalized Schur triples of the form
  $(x,y,x+3y)$ that can be attained under any $2$-coloring of $[n]$ of the
  form $R^sB^{t-s}R^{n-t}$ is
  \[
    \M^{(3)}(n) = \left\lfloor\frac{n^2 - 18 n + 101}{108}\right\rfloor +
    \begin{cases}
      1, & \text{if } n=54k+36, \\
      -1, & \text{if } n=54k+30 \text{ or } n=54k+42, \\
      0, & \text{otherwise}.
    \end{cases}
  \]
\end{theorem}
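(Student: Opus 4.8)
The proof will follow the template of \thm{MGSTcount2}. Since $a=3$ satisfies $a\geq\alpha_8$, \lem{min} places the real optimum of the area function at $n\cdot\big(\tfrac{a+1}{a^2+2a+3},\tfrac{a^2+2a+2}{a^2+2a+3}\big)\big|_{a=3}=\big(\tfrac{2n}{9},\tfrac{17n}{18}\big)$, which lies in region~$(R_7)$; in particular $3t\approx\tfrac{17n}{6}>n$, so the polygon~$133$ is absent. First I would verify that the hypotheses $3s+t\geq n$, $t\geq 3s$, $4s\leq t$ of \lem{MGSTcount} hold in a neighbourhood of this point for all sufficiently large~$n$, so that \lem{MGSTcount} applies and yields
\[
  \M^{(3)}(n,s,t) =
  \sum_{y=1}^{\lfloor s/3\rfloor}\sum_{x=1}^{s-3y}1
  \;+\; \sum_{y=s+1}^{\lfloor (t-s)/3\rfloor}\sum_{x=s+1}^{t-3y}1
  \;+\; \sum_{y=1}^{\lfloor (n-t)/3\rfloor}\sum_{x=t+1}^{n-3y}1 .
\]
Carrying out the inner summations turns this into a closed form --- a polynomial in $s,t,n$ and the three floors $\lfloor s/3\rfloor,\lfloor(t-s)/3\rfloor,\lfloor(n-t)/3\rfloor$ --- entirely parallel in structure to the $a=2$ expression.

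Next I would guess the optimal integer pair, empirically of the form $s_0=\lfloor(4n+c_1)/18\rfloor$ and $t_0=\lfloor 17n/18\rfloor$ (possibly with a bounded residue-dependent correction, as for $t_0$ in \thm{MGSTcount2}), and prove its optimality. Setting $n=54k+\ell$ with $0\leq\ell\leq 53$ clears the outer floors --- the ``leading coefficient'' of both $s_0$ and $t_0$, viewed as linear functions of~$k$, is then a multiple of~$3$ --- and writing $s=s_0+3i_1+i_2$, $t=t_0+3j_1+j_2$ with $i_2,j_2\in\{0,1,2\}$ clears the remaining $\lfloor\cdot/3\rfloor$-terms. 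This produces $54\cdot 9=486$ explicit polynomials in $i_1,j_1,k$, and running cylindrical algebraic decomposition on each of them --- after separately checking, as in the proof of \lem{MGSTcount}, the finitely many nearby lattice points that CAD over~$\R$ cannot see --- shows that each one is minimized at $i_1=j_1=0$, i.e.\ that $\M^{(3)}(n,s,t)$ attains its minimum at $(s_0,t_0)$. Finally I would substitute $(s_0,t_0)$ and evaluate $\M^{(3)}(54k+\ell,s_0,t_0)$ for all~$\ell$; each value will be $\tfrac1{108}(n^2-18n)+\delta_\ell$ for a rational constant~$\delta_\ell$. The feature that is genuinely new, compared with the cases $a=1,2$, is that the $54$ numbers $\delta_\ell$ no longer all lie in one common half-open interval of length~$1$: for three residue classes --- which I expect to be $\ell\in\{30,36,42\}$ --- the value $\delta_\ell$ pokes out of the band occupied by the rest. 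Shifting so that the bulk of the $\delta_\ell$ lands in $[0,1)$ then gives the constant~$101$ and the plain floor $\lfloor(n^2-18n+101)/108\rfloor$, while the three exceptional classes force the additive correction $+1$ (for $n=54k+36$) and $-1$ (for $n=54k+30$ and $n=54k+42$).

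I expect the main obstacle to be twofold. On the computational side, the minimality step needs $486$ separate CAD calls, each possibly supplemented by a handful of explicit lattice-point checks, so the bookkeeping --- carried out in the accompanying notebook~\cite{KoutschanWong19} --- is substantial. On the conceptual side, once the $\delta_\ell$ no longer fit into a single unit interval, no one floor expression can capture $\M^{(3)}(n)$, so one has to pin down exactly which residue classes overflow, and in which direction, before committing to the correction term; and, as in \thm{MGSTcount2}, the finitely many small~$n$ for which the rounded $(s_0,t_0)$ escapes region~$(R_7)$ --- hence the range of validity of \lem{MGSTcount} --- must be checked separately by direct computation.
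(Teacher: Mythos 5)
Your proposal matches the paper's proof essentially step for step: the same optimal point $\bigl(\tfrac{4n}{18},\tfrac{17n}{18}\bigr)$ from \lem{min}, the same case distinction $n=54k+\ell$ with $s=s_0+3i_1+i_2$, $t=t_0+3j_1+j_2$ yielding $486$ CAD calls, and the same final bookkeeping in which the three residue classes $\ell\in\{30,36,42\}$ fall outside the unit band of the remaining $\delta_\ell$ (the paper finds $\delta_{36}=1$ and $\delta_{30}=\delta_{42}=-\tfrac13$ against a bulk ranging over $[-\tfrac1{27},\tfrac{101}{108}]$), forcing exactly the stated corrections. The approach is correct and identical to the paper's.
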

\begin{proof}
  For $a=3$, it follows by \lem{min} that the optimal choice for $s$ and~$t$
  is expected around the point
  \[
    n\cdot\left(\frac{a+1}{a^2+2 a+3}, \frac{a^2+2 a+2}{a^2+2 a+3}\right) =
    \left(\frac{4n}{18},\frac{17n}{18}\right).
  \]
  This means that the proof will require $18\cdot a=54$ case distinctions
  $n=54k+\ell$ for $0\leq\ell\leq53$.  Empirically, we find that for each
  $n\in\N$, the minimum of $\M^{(3)}(n,s,t)$ is attained at
  \begin{align*}  
    s_0 &= \left\lfloor\frac{4n}{18}\right\rfloor -
    \begin{cases}
      1, & \text{if } n=54k+18, \\
      0, & \text{otherwise},
    \end{cases} \\
    t_0 &= \left\lfloor\frac{17n}{18}\right\rfloor -
    \begin{cases}
      1, & \text{if } n=9k+i \text{ for } i\in\{3,4,7,8\}, \\
      2, & \text{if } n=54k+18, \\
      0, & \text{otherwise}.
    \end{cases}
  \end{align*}
  Applying CAD to the $486$ polynomials
  \[
    \M^{(3)}(54k+\ell,s_0+3i_1+i_2,t_0+3j_1+j_2), \quad 0\leq\ell\leq53,\; i_2,j_2\in\{0,1,2\},
  \]
  proves that our choice of $(s_0,t_0)$ locates the
  minimum. Evaluating \\ $\M^{(3)}(n,s_0,t_0)$ for $n=54k+\ell$, one obtains
  $\frac{1}{108}\bigl(n^2-18n\bigr)+\delta_{\ell}$, where $\delta_{36}=1$,
  $\delta_{30}=\delta_{42}=-\frac13$, and all remaining $\delta_{\ell}$ range
  from $-\frac1{27}$ to $\frac{101}{108}$. Hence, the claimed formula follows.
\end{proof}

\begin{theorem}\label{thm:MGSTcount4}
  The minimal number of monochromatic generalized Schur triples of the form
  $(x,y,x+4y)$ that can be attained under any $2$-coloring of $[n]$ of the
  form $R^sB^{t-s}R^{n-t}$ is
  \[
    \M^{(4)}(n) = \left\lfloor\frac{n^2 - 28 n + 245}{216}\right\rfloor -
    \begin{cases}
      1, & \text{if } n=108k+i \mbox{ for } i \in I,\\
      0, & \text{otherwise},
    \end{cases}
  \]
  where $I=\lbrace 0, 1, 27, 28, 43, 47, 48, 53, 58, 63, 67, 68, 69, 73, 78, 
   83, 88, 89, 93
   \rbrace$.
\end{theorem}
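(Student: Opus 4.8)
The plan is to proceed exactly as in the proofs of \thm{MGSTcount2} and \thm{MGSTcount3}, specializing the general machinery of \lem{MGSTcount} to $a=4$. Since $a=4>\alpha_8$, \lem{min} tells us that the optimal pair $(s,t)$ should be located near $n\cdot\bigl(\frac{a+1}{a^2+2a+3},\frac{a^2+2a+2}{a^2+2a+3}\bigr) = \bigl(\frac{5n}{27},\frac{26n}{27}\bigr)$, so that the relevant region is $(R_7)$ (resp.\ $(R_{17})$, depending on whether $at>n$). The denominator $27$ together with the floor $\lfloor ay\rfloor = \lfloor 4y\rfloor$ forces a case distinction $n=108k+\ell$ for $0\leq\ell\leq107$ (i.e., $27\cdot a = 108$ residues), which explains the modulus appearing in the statement. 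First I would verify that the three hypotheses of \lem{MGSTcount}, namely $4s+t\geq n$, $t\geq 4s$, and $5s\leq t$, are satisfied for the conjectured optimal $(s_0,t_0)$ once $n$ is large enough; for the finitely many small~$n$ below that threshold the formula is checked by direct computation.

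Next I would apply \lem{MGSTcount} with $a=4$ to obtain a closed form for $\M^{(4)}(n,s,t)$ as a sum of four (one of which may vanish when $4t>n$) double sums; evaluating the inner sums and simplifying the resulting floor expressions gives a formula of the shape ``quadratic part plus floor corrections.'' Guided by experiment, I would posit explicit formulas $s_0 = \lfloor \tfrac{5n}{27}\rfloor + (\text{small correction depending on } n \bmod 108)$ and $t_0 = \lfloor\tfrac{26n}{27}\rfloor + (\text{small correction})$, matching the pattern seen for $a=2,3$ where the corrections are supported on a sparse set of residues. Then, plugging $s_0 + 4i_1 + i_2$ and $t_0 + 4j_1 + j_2$ (with $i_2,j_2\in\{0,1,2,3\}$) into the formula and reducing modulo~$108$, one obtains on the order of $108\cdot 16 \approx 1700$ polynomials in $i_1,j_1,k$; cylindrical algebraic decomposition applied to each, supplemented by finitely many explicit checks near the boundary, establishes that the minimum over all integer $(i_1,i_2,j_1,j_2)$ is attained at the origin, i.e.\ at $(s_0,t_0)$. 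Finally, evaluating $\M^{(4)}(108k+\ell, s_0, t_0)$ for each $\ell$ yields $\tfrac{1}{216}(n^2-28n) + \delta_\ell$, and one reads off that $\delta_i = \tfrac{245}{216} - 1$ precisely for $i\in I$ (where the quadratic part's value and the floor of the final formula differ by one) and $\delta_\ell$ otherwise lies in an interval of length less than~$1$ sitting above $\tfrac{245}{216}-1$, so that the stated floor formula with the correction set $I$ is correct.

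The main obstacle, as in the $a=2,3$ cases, is not conceptual but bookkeeping: one must correctly guess the residue-dependent corrections to $s_0$ and $t_0$ (the set of ``exceptional'' residues where $(s_0,t_0)$ deviates from the naive rounding is larger here than for $a=2,3$, reflected in the size of~$I$), and then manage the large number of CAD calls without error. A secondary subtlety is the transition at $4t=n$, i.e.\ $t = n/4$: near $t_0 \approx \tfrac{26n}{27}$ this is not an issue since $\tfrac{26}{27} > \tfrac14$, so the case $133$ is always present and the fourth sum in \lem{MGSTcount} is irrelevant; but one should still confirm that the conjectured $(s_0,t_0)$ genuinely lies in $(R_{17})$ and not in a neighboring region for every residue class, which amounts to the inequality checks mentioned above. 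All of this is routine once the correct ansatz is in place, and every individual step is rigorously certified by CAD, so the proof is complete modulo the computations recorded in the accompanying electronic material~\cite{KoutschanWong19}.
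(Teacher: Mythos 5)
Your proposal is correct and follows essentially the same route as the paper: the case distinction $n=108k+\ell$ for $0\leq\ell\leq107$, an empirically guessed $(s_0,t_0)$ near $\bigl(\tfrac{5n}{27},\tfrac{26n}{27}\bigr)$ with residue-dependent corrections, CAD applied to the $1728$ polynomials $\M^{(4)}(108k+\ell,s_0+4i_1+i_2,t_0+4j_1+j_2)$, and extraction of the floor formula from the values $\delta_\ell$. Two minor slips that do not affect the argument: since $4t_0>n$, the case $133$ is \emph{absent} (which is exactly why the corresponding sum vanishes), not ``always present'' as you write; and the exceptional $\delta_\ell$ are not all equal to $\tfrac{245}{216}-1$ --- the paper instead observes that the $\delta_\ell$ span an interval of length greater than $1$ and chooses the shift $\tfrac{245}{216}$ precisely so as to minimize (to the $19$ residues in $I$) the number of cases requiring the $-1$ correction.
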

\begin{proof}
  For $a=4$, it follows by \lem{min} that the optimal choice for $s$ and~$t$
  is expected around the point
  \[
    n\cdot\left(\frac{a+1}{a^2+2 a+3}, \frac{a^2+2 a+2}{a^2+2 a+3}\right) =
    \left(\frac{5n}{27},\frac{26n}{27}\right).
  \]
  This means that the proof will require $27\cdot a=108$ case distinctions
  of the form $n=108k+\ell$ for $0\leq\ell\leq 107$.  Empirically, we find
  that for each $n\in\N$, the minimum of $\M^{(4)}(n,s,t)$ is attained at
  \begin{align*}  
    s_0 &= \left\lfloor\frac{5n-4}{27}\right\rfloor +
    \begin{cases}
      -1, & \text{if } n=108k+28, \\
      1, & \text{if } n=108k+i \text{ for } i\in\{0, 87, 103\}, \\
      0, & \text{otherwise}.
    \end{cases} \\
    t_0 &= \left\lfloor\frac{26n-34}{27}\right\rfloor +
    \begin{cases}
      -1, & \text{if } n=108k+i \text{ for } i\in\{28, 33, 38, 43\}, \\
      1, & \text{if } n=108k+i \\
      & \text{ for } i\in\{1, 77, 78, 82, 83, 88, 93, 98, 104\},\\
     2, & \text{if } n=108k+i \text{ for } i\in\{0, 87, 103\}, \\
      0, & \text{otherwise}.
    \end{cases}
  \end{align*}
  Applying CAD to the $1728$ polynomials
  \[
    \M^{(4)}(108k+\ell,s_0+4i_1+i_2,t_0+4j_1+j_2), \quad 0\leq\ell\leq107,\; i_2,j_2\in\{0,1,2,3\},
  \]
  proves that our choice of $(s_0,t_0)$ locates the
  minimum. Evaluating\\ $\M^{(4)}(n,s_0,t_0)$ for $n=108k+\ell$,
  $0\leq\ell\leq107$, one obtains 108 polynomials of the form
  $\frac{1}{216}\bigl(n^2-28n\bigr)+\delta_{\ell}$.  At this point, the
  analysis deviates a bit from the previous two theorems, because we observe
  that the range of the computed $\delta_{\ell}$'s is much larger
  than~$1$. Therefore, we would like to choose an appropriate interval to
  contain the largest number of $\delta_{\ell}$ such that we minimize the
  number of exceptional cases (i.e., the necessary corrections resulting from
  applying the floor function to numbers that are out of range).

  To accomplish this, we find that shifting all of the values down by
  $\tfrac{29}{216}$ gives the minimum number (19, to be precise) of
  $\delta_{\ell}$ that are not within range (i.e., not in $[0,1)$). We now
  realize that these are the values that give us our desired count, so we add
  1 to make sure it is recognized by the floor function. Hence, the optimal
  delta is $\tfrac{29}{216}+1=\frac{245}{216}.$ Finally, for each
  of the nineteen $\delta_{\ell}$'s that are out of bounds (in this case, less
  than~$0$), we remove $1$ and this gives us our claimed formula.
\end{proof}

\begin{figure}[t]
  \begin{center}
    \includegraphics[width=0.4\textwidth]{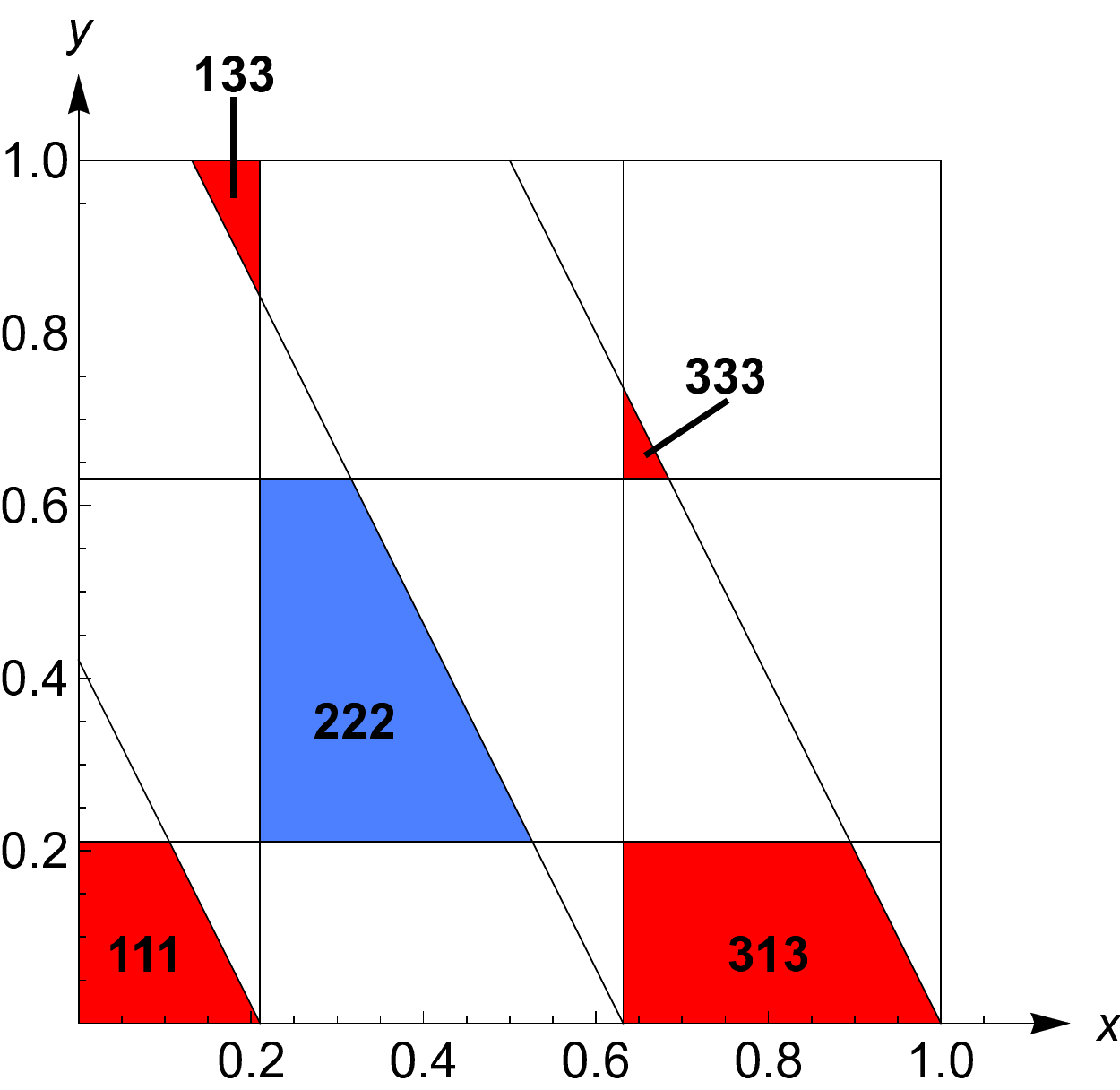}
    \qquad\qquad
    \includegraphics[width=0.4\textwidth]{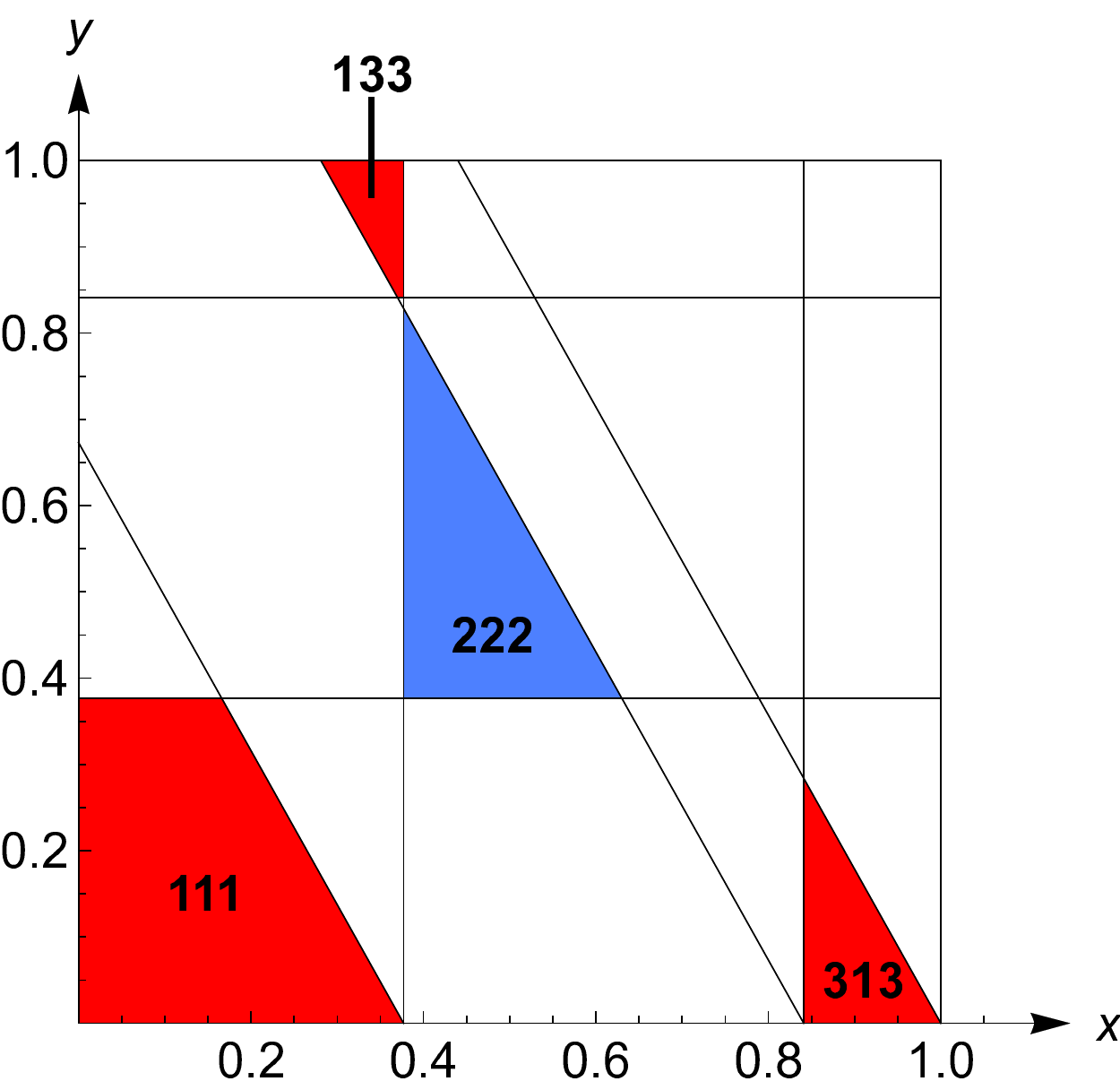}
  \end{center}
  \caption{The red and blue polygons correspond to monochromatic generalized Schur
    triples for $a=\frac12$, $s=\frac{4}{19}$, $t=\frac{12}{19}$ (left) and
    $a=0.56$, $s=0.377$, $t=0.841$ (right).}
  \label{fig:MGSTarea3}
\end{figure}

\pagebreak

\begin{theorem}\label{thm:MGSTcount0.5}
  The minimal number of monochromatic generalized Schur triples of the form
  $\bigl(x,y,x+\lfloor\frac{1}{2}y\rfloor\bigr)$ that can be attained under
  any $2$-coloring of $[n]$ of the form $R^sB^{t-s}R^{n-t}$ is given by
  \[
    \M^{(1/2)}(n) =
    \left\lfloor\frac{15n^2+72}{76}\right\rfloor +
    \begin{cases}
      1, & \text{if } n=38k+18 \text{ or } n=38k+20, \\
      -1, & \text{if } n=38k+19, \\
      0, & \text{otherwise}.
    \end{cases}
  \]
\end{theorem}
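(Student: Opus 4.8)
The plan is to follow the recipe of the proofs of \thm{MGSTcount2}--\thm{MGSTcount4}, adapted to the regime $a<1$. First I would locate the optimal real parameters via \lem{min}. Since $\alpha_3=0.4056\ldots<\tfrac12<0.5534\ldots=\alpha_4$, the relevant row of the table in \lem{min} is the one valid for $\alpha_3\le a\le\alpha_4$, and substituting $a=\tfrac12$ gives
\[
  (s_0,t_0)=\Bigl(\tfrac{4}{19}n,\tfrac{12}{19}n\Bigr),
  \qquad m\bigl(\tfrac12\bigr)=\tfrac{15}{76}.
\]
This already explains the leading term $\tfrac{15}{76}n^2$ of the claimed formula and suggests that the period of the correction terms is a divisor of $76$; the configuration $\bigl(a,s,t\bigr)=\bigl(\tfrac12,\tfrac{4}{19},\tfrac{12}{19}\bigr)$ is precisely the one drawn on the left of \fig{MGSTarea3}. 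One should keep in mind that for $0<a<1$ the three-block pattern is not known to be globally optimal among all $2$-colorings (as remarked at the end of \sec{real}), which is why the statement is restricted to colorings of the form $R^sB^{t-s}R^{n-t}$.

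Next, I would convert the geometric picture into an exact count, the analogue of \lem{MGSTcount} but for $a<1$. The point $\bigl(\tfrac{4}{19},\tfrac{12}{19},\tfrac12\bigr)$ lies in one of the cases $(R_{18})$--$(R_{70})$ of \lem{MGSTarea} that are not printed in the lemma but recorded in the electronic material~\cite{KoutschanWong19}; identifying that case tells us which of the types $111,222,113,131,133,313,333$ are admissible and, via the truncating lines (for $a<1$ the polygon $111$ is cut off by $y=s$ and $222$ is likewise truncated, see the proof of \lem{MGSTarea}), what the summation limits are. This yields $\M^{(1/2)}(n,s,t)$ as a short sum of double sums $\sum_y\sum_x 1$; because $a=\tfrac12$, the inner and outer bounds involve $\lfloor y/2\rfloor$, so summing produces a closed form that is a quadratic polynomial in $s,t,n$ plus a few floor terms of the shape $\lfloor(\text{linear})^2/4\rfloor$ and $\lfloor(\text{linear})/2\rfloor$.

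The remaining steps are routine bookkeeping. From empirical data I would guess the optimal integer parameters as $s_0=\bigl\lfloor\tfrac{4n+c_1}{19}\bigr\rfloor$ and $t_0=\bigl\lfloor\tfrac{12n+c_2}{19}\bigr\rfloor$ up to finitely many corrections by $\pm1$ depending on $n\bmod 38$, then set $n=38k+\ell$ with $0\le\ell\le37$ (the factor $38=2\cdot19$ absorbing both the denominator $19$ coming from \lem{min} and the $2$ from $\lfloor y/2\rfloor$), and, when substituting $s_0+i$ and $t_0+j$, additionally split $i=2i_1+i_2$, $j=2j_1+j_2$ with $i_2,j_2\in\{0,1\}$ to eliminate the remaining floors. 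This produces finitely many polynomials in $i_1,j_1,k$, to each of which cylindrical algebraic decomposition is applied (together with a finite check of the lattice points near the ellipse-shaped zero set, exactly as in \lem{minMnst}) to certify that the minimum of $\M^{(1/2)}(n,s,t)$ is attained at $(s_0,t_0)$. Evaluating $\M^{(1/2)}(38k+\ell,s_0,t_0)$ for each $\ell$ gives $\tfrac{15}{76}n^2+(\text{linear})+\delta_\ell$, and one finally chooses the additive constant $\tfrac{72}{76}$ so that as many $\delta_\ell$ as possible land in $[0,1)$; the three residues $\ell\in\{18,19,20\}$ for which this fails become the exceptional cases, corrected by $\pm1$ as in the statement. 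As in the earlier theorems, the region conditions can break down for small $n$, so those finitely many values are verified by direct computation.

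The main obstacle I anticipate is the counting step: correctly pinning down which of the (unlisted) regions $(R_{18})$--$(R_{70})$ governs the configuration near $\bigl(\tfrac{4}{19},\tfrac{12}{19}\bigr)$ and getting every truncated-polygon summation limit exactly right, since a single off-by-one there corrupts all the $\delta_\ell$ and hence the final formula. Once the exact count is correct, the minimization and the case analysis are discharged mechanically by CAD, just as in \thm{MGSTcount2}.
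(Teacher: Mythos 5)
Your proposal follows essentially the same route as the paper's proof: it locates $(s_0,t_0)=(\tfrac{4}{19}n,\tfrac{12}{19}n)$ and $m(\tfrac12)=\tfrac{15}{76}$ from the $\alpha_3\leq a\leq\alpha_4$ row of \lem{min}, identifies the governing region (the paper pins it down as $(R_{69})$, with five double sums for the types $111,222,313,133,333$), performs the case distinction $n=38k+\ell$ with the split $i=2i_1+i_2$, $j=2j_1+j_2$, certifies minimality at the conjectured $(s_0,t_0)$ by CAD, and assembles the floor formula with the exceptional residues $\ell\in\{18,19,20\}$. The level of detail matches the paper's own proof, which likewise defers the explicit region identification and CAD verification to the electronic supplement.
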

\begin{proof}
  For $a=\frac12$, it follows by \lem{min} that the optimal choice for $s$
  and~$t$ is expected around the point
  \[
    n\cdot\left(\frac{-2 a^2+a+1}{-4 a^3+5 a^2+6 a+1},\frac{-2 a^3+a^2+4 a+1}{-4 a^3+5 a^2+6 a+1}\right) =
    \left(\frac{4n}{19},\frac{12n}{19}\right).
  \]
  For this choice of parameters we end up in region $(R_{69})$ (see \fig{pwregions2}).
  Under the conditions that characterize this region, more precisely
  \[
    \frac{n}{2} \leq t \leq \frac{2n}{3} \;\land\; t - s \leq \frac{n}{2} \;\land\; 2s \leq t,
  \]
  the number of MGSTs is given by
  \begin{multline*}
    \M^{(1/2)}(n,s,t) =
    \sum_{y=1}^s \; \sum_{x=1}^{s - \lfloor y/2\rfloor} \!\! 1 \; +
    \sum_{y=s+1}^t \sum_{x=s+1}^{t - \lfloor y/2\rfloor} \!\! 1 \; + \;\;
    \sum_{y=1}^s \; \sum_{x=t+1}^{n - \lfloor y/2\rfloor} \!\! 1 \; + \\
    + \sum_{y=2t-2s+1}^n \; \sum_{x=t+1-\lfloor y/2\rfloor}^s \!\!\!\!\!\! 1 \;\; +
    \sum_{y=t+1}^{2n-2t-1} \; \sum_{x=t+1}^{n - \lfloor y/2\rfloor} \!\! 1.
  \end{multline*}
  The five double sums correspond to the cases $111$, $222$, $313$, $133$,
  $333$, respectively, and the summation ranges are chosen such that they
  actually agree with the first two coordinates of the monochromatic triples
  in question, see \fig{MGSTarea3}.

  In order to eliminate all floor functions, a case distinction $n=38k+\ell$
  is made. It is conjectured that the minimum is attained at $(s,t)=(s_0,t_0)$
  with
  \begin{align*}
    s_0 &= \left\lfloor\frac{4n+7}{19}\right\rfloor +
    \begin{cases}
      1, & \text{if } n=19k+17, \\
      0, & \text{otherwise},
    \end{cases} \\
    t_0 &= \left\lfloor\frac{12n+6}{19}\right\rfloor +
    \begin{cases}
      1, & \text{if } n=19k+4, \\
      0, & \text{otherwise}.
    \end{cases}
  \end{align*}
  This conjecture is proven by case distinction and CAD, as in
  \thm{MGSTcount2}. As a final result, one obtains the claimed formula,
  see~\cite{KoutschanWong19} for the details.
\end{proof}

It has to be noted that all results presented so far in this section
(Theorems~\ref{thm:MGSTcount2}--\ref{thm:MGSTcount0.5}) are based on the
assumption of the optimal coloring being of the form
$R^sB^{t-s}R^{n-t}$. While we have strong evidence that this assumption is
valid for $a>1$ (and in fact we know it to be true~\cite{Schoen99} for $a=1$),
it seems to be inappropriate for $0<a<1$. More concretely,
we can construct explicit examples where we get fewer MGSTs for $a=\frac12$
than predicted in~\thm{MGSTcount0.5}: the first instance is $n=4$, where
\thm{MGSTcount0.5} yields four MGSTs for the coloring $RBBR$, namely $(1,1,1)$,
$(4,1,4)$, $(2,2,3)$, $(2,3,3)$, but where the better coloring $RBRB$ exists,
that allows only three MGSTs, namely $(1,1,1)$, $(3,1,3)$, and $(2,4,4)$.
Note, however, that this is not a counter-example to the theorem because
the coloring $RBRB$ is not of the form $R^sB^{t-s}R^{n-t}$.

We close this section by stating a conjecture about what we believe is the
true minimum for $a=\frac12$.
\begin{conjecture}
  For $n\geq12$, the minimal number of monochromatic generalized Schur triples
  of the form $\bigl(x,y,x+\lfloor\frac{1}{2}y\rfloor\bigr)$ that can be
  attained under any $2$-coloring of $[n]$ is given by
  \[
    \left\lfloor\frac{n^2 + 5}{6}\right\rfloor,
  \]
  and it occurs at the coloring $R^sB^{t-s}R^{u-t}B^{n-u}$ for
  \[
    s = \left\lfloor\frac{n + 3}{6}\right\rfloor, \qquad
    t = \left\lfloor\frac{n + 1}{2}\right\rfloor, \qquad
    u = \left\lfloor\frac{5 n + 3}{6}\right\rfloor.
  \]
\end{conjecture}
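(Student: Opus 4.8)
The plan is to break the conjecture into the two inequalities it packages together: an upper bound, asserting that the exhibited four-block coloring $R^sB^{t-s}R^{u-t}B^{n-u}$ produces exactly $\lfloor(n^2+5)/6\rfloor$ monochromatic generalized Schur triples, and a lower bound, asserting that no $2$-coloring of $[n]$ does better. The upper bound can be obtained with the same toolkit used for Theorems~\ref{thm:MGSTcount2}--\ref{thm:MGSTcount0.5}. First I would pass to the continuous limit: rescaling $[0,n]^2$ to $[0,1]^2$ and replacing $x+\lfloor y/2\rfloor$ by $x+y/2$, the proportion of pairs $(x,y)$ that yield a monochromatic triple tends to the area $A_4(s,t,u)$ of the union of polygons determined by the admissible colour types. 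Since each coordinate of $(x,y,x+y/2)$ lies in one of the four blocks $I_1,\dots,I_4$ and monochromaticity forces all three into $\{I_1,I_3\}$ or all three into $\{I_2,I_4\}$ (and $x+y/2\ge x$ rules out several of these), there are only finitely many types; subdividing $\{0\le s\le t\le u\le1\}$ along the hyperplanes where this polygon combinatorics changes --- exactly as in \lem{MGSTarea}, but with one extra variable --- represents $A_4$ as a piecewise polynomial. Zeroing the gradient on each piece and using cylindrical algebraic decomposition to select the smallest critical value should return the minimum $\tfrac16$, attained at $(s,t,u)=(\tfrac16,\tfrac12,\tfrac56)$; this identifies the region in which the discrete optimum is to be sought.

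Staying in that region, I would write $\M^{(1/2)}(n,s,t,u)$ as an explicit sum of double sums, one per admissible type $ijk$, in the spirit of \lem{MGSTcount}, with summation bounds adjusted so that they really enumerate the first two coordinates of the triples (the electronic companion \cite{KoutschanWong19} already carries out the five-sum version for the three-block case, and the four-block case only adds more of the same). Substituting the conjectured $s_0=\lfloor(n+3)/6\rfloor$, $t_0=\lfloor(n+1)/2\rfloor$, $u_0=\lfloor(5n+3)/6\rfloor$ and making a case distinction on $n$ modulo a suitable period removes all floor functions and turns $\M^{(1/2)}(n,s_0,t_0,u_0)$ into finitely many polynomials in one parameter; replacing $(s_0,t_0,u_0)$ by $(s_0+i,t_0+j,u_0+h)$ and feeding the resulting polynomials in $i,j,h$ (and the parameter) to CAD then certifies that $(s_0,t_0,u_0)$ is where the minimum over this four-block family is attained. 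Evaluating at $(s_0,t_0,u_0)$ yields $\tfrac{n^2}{6}+\delta_\ell$ with all $\delta_\ell$ lying in an interval of length one and three residues requiring the $\pm1$ corrections, giving the stated closed form; a finite check then disposes of the small $n\ge12$ for which the critical point might slip out of the correct region. This establishes ``$\le$''.

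The genuinely hard part --- and the reason the statement is offered as a conjecture --- is the reverse inequality: that \emph{every} $2$-coloring of $[n]$, not just four-block ones, produces at least $\lfloor(n^2+5)/6\rfloor$ monochromatic triples. For $a=1$ the analogous bound leans on Schoen's structural result~\cite{Schoen99} that an optimal coloring must consist of consecutive monochromatic runs, after which one only optimizes over block patterns; for $0<a<1$ no such reduction is in the literature, and the present paper even exhibits explicit colorings beating the three-block optimum. The natural route is therefore to first establish a Schoen-type ``an optimum is a block coloring'' lemma for $a=\tfrac12$ --- presumably by an exchange/averaging argument on adjacent, differently coloured elements, tracking how the count of triples $(x,y,x+\lfloor y/2\rfloor)$ responds --- and then to show, by minimizing a $k$-parameter area function in the limit and bounding the discrete discrepancy, that among all block colorings the four-block pattern with proportions $(\tfrac16,\tfrac12,\tfrac56)$ beats every other (in particular that merging or adding blocks never helps). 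I expect the first of these steps, ``runs suffice'', to be the real obstacle: the swap that works cleanly for $x+y=z$ becomes delicate once the middle coordinate is $\lfloor y/2\rfloor$, since moving a single element can create and destroy triples asymmetrically, so one needs a monovariant robust to this asymmetry. Absent such a lemma, one can still verify the conjecture for all $12\le n\le N_0$ by direct enumeration, recording it as strong numerical support for the general claim.
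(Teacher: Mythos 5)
The statement you were asked to prove is presented in the paper as a \emph{conjecture}: the authors give no proof of it, and they explicitly explain why --- their entire machinery rests on the assumption that an optimal coloring is a block coloring, an assumption they only trust for $a\geq 1$ and which they know fails in the three-block form for $a=\tfrac12$ (witness the $RBRB$ example for $n=4$ and the $RBRBBRRBRBB$ example for $n=11$). So there is no paper proof to compare against, and your proposal is best judged as a research plan. Its first half --- computing the continuous four-block area function, locating its minimum at $(s,t,u)=(\tfrac16,\tfrac12,\tfrac56)$ with value $\tfrac16$, then discretizing, removing floors by a case distinction on $n$ modulo a period, and certifying optimality \emph{within the four-block family} by CAD --- is a faithful and plausible extension of the method of \lem{MGSTarea}, \lem{MGSTcount}, and Theorems~\ref{thm:MGSTcount2}--\ref{thm:MGSTcount0.5}, and would plausibly establish the ``$\leq$'' direction together with the claim that the exhibited coloring attains the stated count. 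Note, though, that even this half is more than the paper itself carries out for this statement, and your asserted values ($\tfrac16$ as the four-block continuous minimum, the length-one window for the $\delta_\ell$) are conjectural until the computation is actually run.

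The genuine gap is the one you yourself name: the lower bound over \emph{all} $2^n$ colorings. Your plan defers this to a hoped-for Schoen-type lemma (``an optimal coloring consists of at most four runs'') proved by an exchange argument, but no such lemma is known for $0<a<1$, the paper's own counterexamples show that the three-run version of it is false, and the authors even report that for some rational $a<1$ they could not detect any block pattern in the optimal colorings. Without that structural reduction, nothing in your argument controls colorings outside the four-block family, so the proposal does not prove the statement; it reproduces exactly the obstruction that makes the statement a conjecture in the first place. Your fallback --- exhaustive verification for $12\leq n\leq N_0$ --- yields evidence, not a proof. In short: your diagnosis of where the difficulty lies is correct and matches the authors' own assessment, but the proposal, as you acknowledge, leaves the essential step unproved.
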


Curiously, the conjectured formula is not valid for $n=11$, where it would
give a minimum number of~$21$ MGSTs with a four-block coloring. The true
minimum is~$20$ and it is attained at the coloring $RBRBBRRBRBB$.

\vspace{0.8cm}

\section{Conclusions and outlook}
\label{sec:X}

\vspace{1mm}\noindent

In this paper we have presented, for the first time, exact formulas for the
minimum number of monochromatic (generalized) Schur triples. We give such
formulas explicitly only for the few cases $a=1,2,3,4$, but we want to point
out that we could do many more special cases, say $a=5,6,7,\dots$ or
$a=\frac32,\frac54,\dots$, based on the general analysis carried out in
\sec{real}.  In fact, the proofs would be done in completely analogous
fashion, requiring only little human interaction, but an increasing amount of
computation time.  In this sense, our paper contains a hidden treasure, which
is an infinite set of theorems that just have to be unveiled.

For future research, we propose to look more closely at the cases of
generalized Schur triples $(x,y,x+\lfloor ay\rfloor)$ with $0<a<1$. Our
analysis is based on the assumption that the optimal coloring that produces
the least number of monochromatic triples consists of three
blocks. Computational experiments suggest that this assumption is not valid
for $0<a<1$. For example, we believe that four blocks are necessary to capture
the minimum in the case $a=\frac12$, as conjectured in the previous section.
For some less nice rational numbers $a<1$ we were even not able
to detect a block pattern in the optimal coloring,
but that may be an artifact due to the limited size of~$n$ for which we can do
exhaustive searches (note that there are $2^n$ possible colorings).

\pagebreak

Our results are heavily based on symbolic computation techniques, such as
cylindrical algebraic decomposition and symbolic summation.  Often our proofs
require case distinctions into several dozens or even several hundred cases,
and it would be too tedious to check all of them by hand. The reader should be
convinced by now that symbolic computation can be very useful and that it
could be adapted to solve problems in other areas of mathematics. We
provide all details of our calculations in the supplementary electronic
material~\cite{KoutschanWong19}, which we hope is instructive for readers who
would like to become more acquainted with the techniques that we used here.

\vspace*{4mm}
\noindent
{\bf Acknowledgment.} We would like to thank our colleagues Thotsaporn
Thanatipanonda, Thibaut Verron, Herwig Hauser, and Carsten Schneider for
inspiring discussions, comments, and encouragement.  The first author was
supported by the Austrian Science Fund (FWF): P29467-N32.  The second author
was supported by the Austrian Science Fund (FWF): F5011-N15.

\vspace{0.8cm}

\bibliographystyle{plain}
\bibliography{schur}

\end{document}